\allowdisplaybreaks \numberwithin{equation}{section}
\numberwithin{equation}{section}
\newtheorem{theorem}{Theorem}[section]
\newtheorem{proposition}[theorem]{Proposition}
\newtheorem{corollary}[theorem]{Corollary}
\newtheorem{lemma}[theorem]{Lemma}
\newtheorem*{Yudovich's Theorem}{Yudovich's Theorem}
\theoremstyle{definition}
\newtheorem{definition}[theorem]{Definition}
\theoremstyle{remark}
\newtheorem{remark}[theorem]{Remark}
\newtheorem{example}[theorem]{Example}
\begin{document}

\title[Orbital Stability of First Laplacian Eigenstates for the Euler Equation]{Orbital Stability of First Laplacian Eigenstates for the Incompressible Euler Equation on a Flat 2-Torus}
 \author{Guodong Wang}

\address{School of Mathematical Sciences, Dalian University of Technology, Dalian 116024, PR China}
\email{gdw@dlut.edu.cn}

\begin{abstract}
On a two-dimensional flat torus, Laplacian eigenfunctions admit explicit trigonometric representations. It is known that every first eigenstate on a rectangular or square torus is stable under the incompressible Euler dynamics modulo translations. We extend this result to flat tori of arbitrary shape and thereby obtain, to the best of our knowledge, the first family of orbitally stable sinusoidal Euler flows on a hexagonal torus. The proof uses a Burton-type stability criterion and has two main ingredients: (i) a variational characterization of each equimeasurable class in the first eigenspace and (ii) the finiteness of the number of translational orbits contained in each such class. The second ingredient is particularly delicate in the hexagonal case, where it reduces to the analysis of a polynomial system reflecting both the symmetry of the torus and the structure of its first eigenspace.
\end{abstract}

\subjclass{35B35, 35Q31, 76B47.}
\keywords{2D incompressible Euler equation, orbital stability, flat torus, Laplacian eigenstate}

\renewcommand{\subjclassname}{2020 Mathematics Subject Classification}

\maketitle
\tableofcontents
\section{Introduction}\label{sec1}

\subsection{The Euler equation on a flat 2-torus}\label{sec11}

A flat $2$-torus is the quotient of the Euclidean plane by a two-dimensional lattice.
Throughout this paper, let $\Lambda$ be a two-dimensional lattice generated by two linearly independent vectors $\bm{\xi}, \bm{\eta} \in \mathbb{R}^2$, i.e.,
\[
\Lambda = \left\{ m\bm{\xi} + n\bm{\eta} \mid m,n \in \mathbb{Z} \right\}.
\]
The pair $(\bm{\xi}, \bm{\eta})$ is called a \emph{basis} of $\Lambda$.
Note that the basis of a lattice is not unique.
Denote by $\mathbb{T} = \mathbb{R}^2 / \Lambda$ the flat $2$-torus associated with $\Lambda$.
When $\bm{\xi}$ and $\bm{\eta}$ vary, we obtain flat $2$-tori of different shapes. All flat $2$-tori have the same topology;  however, their global geometries can differ, which may lead to notable differences in certain problems, such as the number of critical points of the Green function (see \cite{LW}).

For an ideal (i.e., incompressible and inviscid) fluid of unit density on $\mathbb T$, the evolution is governed by the following Euler equation:
\begin{equation}\label{euler}
\begin{cases}
  \partial_t\bm v+(\bm v\cdot\nabla)\bm v=-\nabla P, &t\in\mathbb R,\,\bm x=(x_1,x_2)\in\mathbb T,\\
  \nabla\cdot\bm v=0,
\end{cases}
\end{equation}
where $\bm v=(v_1,v_2)$ is the velocity field, and $P$ is the scalar pressure.
The study of the Euler equations on $\mathbb T$ is equivalent to that of the corresponding equations on $\mathbb R^2$ subject to the doubly periodic conditions
\[
\bm v(t,\bm x)=\bm v(t,\bm x+\bm\xi),\quad
\bm v(t,\bm x)=\bm v(t,\bm x+\bm\eta),\quad\forall\,t \in \mathbb{R},\ \bm{x} \in \mathbb{R}^2.\]
Such doubly periodic conditions arise naturally in the study of vortex arrays and point-vortex dynamics; see, for example, \cite{SA,TK}. Although every flat $2$-torus can be mapped linearly onto a rectangular one, such a map does not, in general, preserve the Euclidean metric, the Laplacian, or the kinetic energy. Therefore, the shape of the torus is an essential feature of the Euler problem considered here.

Since the integral of the velocity is a conserved quantity (see Lemma \ref{lma1} in Appendix \ref{apdx1}), we may assume,
up to a Galilean transformation,  that $\bm v$ has zero mean.
Introduce the scalar vorticity $\omega:=\partial_1v_2-\partial_2v_1$, which automatically has zero mean. Denote by $\mathsf G$  the inverse of $-\Delta$ on $\mathbb T$ subject to the mean-zero condition; see Definition \ref{defap1}.
According to Lemma \ref{lma2}, we have
\[
\bm{v} = \nabla^\perp \mathsf G\omega,
\quad \nabla^\perp := (\partial_2, -\partial_1).\]
 Therefore,
the Euler equation \eqref{euler} can be rewritten as follows:
\begin{equation}\label{voreq}
\partial_t\omega+\nabla^\perp\mathsf G\omega \cdot\nabla\omega=0,\quad t\in\mathbb R,\,\, \bm x \in\mathbb T.
\end{equation}
 There are many global well-posedness results for \eqref{voreq} with initial vorticity in various function spaces; see \cite{BKM,De,DM,Y}. In particular, given a smooth mean-zero function $\omega_0$ on $\mathbb T$, there exists a unique global smooth mean-zero  solution $\omega$ such that $\omega(0,\cdot)=\omega_0.$

For sufficiently smooth solutions to \eqref{voreq}, the following two conservation laws hold (see \cite{BV,MB,MP}):
\begin{itemize}
  \item [(C1)]
  The kinetic energy $E$   is conserved, where $E$ is regarded as a functional of $\omega$ in this paper:
  \begin{equation}\label{s05}
    E(\omega)=\frac{1}{2}\int_{\mathbb T} \omega\mathsf G\omega d\bm x.
  \end{equation}
  \item [(C2)]  The distribution function of the vorticity is invariant:
       \[\omega(t,\cdot)\in\mathcal R_{\omega(0,\cdot)}\quad\forall\, t\in\mathbb R,\]
      where $\mathcal R_f$ denotes the  rearrangement class of a given measurable function $f$, i.e.,
 \[
 \mathcal R_{f}=\left\{ g:\mathbb T\to\mathbb R \mid |\{\bm  x\in\mathbb T\mid g(\bm   x)>s\}|=|\{\bm   x\in\mathbb T\mid f(\bm   x)>s\}|\,\,\forall\,s\in\mathbb R\right\},
 \]
where $|\cdot|$ denotes the two-dimensional Lebesgue measure. As a consequence, there exist infinitely many integral invariants, known as \emph{Casimirs}, of the form
      $\int_{\mathbb T}F(\omega)d\bm x,$
      where $F:\mathbb R\to\mathbb R$ is any Borel measurable function.
 \end{itemize}

A steady solution to \eqref{voreq} is a solution not depending on the time variable. It is clear that $\bar\omega:\mathbb T\to\mathbb R$ is a steady solution if and only if   $\nabla\mathsf G\bar\omega$ and $\nabla\bar\omega$ are parallel. In particular, if $u\in C^2(\mathbb T)$ satisfies
\begin{equation}\label{sscon}
\begin{cases}
-\Delta u=\varphi(u),&\bm x\in\mathbb T,\\
\int_{\mathbb T}u d\bm x=0
\end{cases}
\end{equation}
for some $\varphi\in C^1(\mathbb R)$, then $\bar\omega=-\Delta u$ is a steady solution. In the literature, there are many results on the construction and classification of steady solutions to the two-dimensional Euler equation in $\mathbb R^2$ or in domains with a boundary; however, on a flat 2-torus, such results are rather scarce. Recently, Elgindi and Huang \cite{EH} proved the existence of both smooth and singular steady solutions around the Bahouri–Chemin patch on a square torus by studying \eqref{sscon} for suitable choices of  $\varphi$. It is not clear whether their construction remains valid on a flat 2-torus of arbitrary shape.

\subsection{Dual lattice and Laplacian eigenfunctions}\label{sec12}
Choosing $\varphi(s)=\lambda s$ in \eqref{sscon},
we obtain the following Laplacian eigenvalue problem:
\begin{equation}\label{evp}
\begin{cases}
 -\Delta u=\lambda u, & \bm  x\in \mathbb T, \\
 \int_{\mathbb T}ud\bm x=0.
\end{cases}
\end{equation}
To solve \eqref{evp}, we define the dual lattice $\Lambda^*$ of $\Lambda$ as follows:
\[\Lambda^*=\left\{\bm k\in\mathbb R^2\mid \bm  k\cdot \bm \xi\in\mathbb Z, \bm k\cdot\bm \eta\in\mathbb Z\right\}.\]
A basis  $(\bm \xi^*,\bm\eta^*)$ of $\Lambda^*$  can be computed as follows (as one can easily verify):
\begin{equation}\label{dulbas}
\bm\xi^*=\frac{(\eta_2,-\eta_1)}{\xi_1\eta_2-\xi_2\eta_1},\quad \bm\eta^*
=\frac{(-\xi_2,\xi_1)}{\xi_1\eta_2-\xi_2\eta_1}.
\end{equation}
Note that such a basis satisfies
\begin{equation}\label{dulbas2}
\bm\xi^*\cdot\bm\xi=1,\quad \bm\xi^*\cdot\bm\eta=\bm\xi\cdot\bm\eta^*=0,\quad \bm\eta^*\cdot\bm\eta=1.
\end{equation}
According to Lemma \ref{lemeigen1} in Section \ref{sec2}, the set of  eigenvalues of \eqref{evp} is
\begin{equation}\label{evevp}
\left\{4\pi^2|\bm k|^2\mid \bm k\in\Lambda^*\setminus\{(0,0)\}
\right\},
\end{equation}
and the eigenspace associated with an eigenvalue $\lambda$ is
\begin{equation}\label{efevp}
{\rm span}\left\{e^{2\pi {\rm i}\bm k\cdot \bm x}\mid 4\pi^2|\bm k|^2 =\lambda\right\},
\end{equation}
 where ${\rm i}^2=-1$. Note that different vectors \(\bm k\) may yield the same eigenvalue, so the eigenvalues may have nontrivial multiplicities.

In this paper, we will focus on the first eigenvalue $\lambda_1$
and the first eigenspace $\mathbb E_1$. Denote by $\rho(\Lambda^*)$ the shortest nonzero length of $\Lambda^*$:
\[
\rho(\Lambda^*) = \min_{\bm k \in \Lambda^* \setminus \{(0,0)\}} |\bm k|,
\]
and by $S(\Lambda^*)$ the set of shortest nonzero vectors in $\Lambda^*$:
\[
S(\Lambda^*) = \{\bm k \in \Lambda^* \mid |\bm k| = \rho(\Lambda^*)\}.
\]
According to \eqref{efevp} and \eqref{evevp},
\[
\lambda_1 = 4\pi^2 \rho(\Lambda^*)^2, \quad \mathbb E_1 ={\rm span} \left\{ e^{2\pi {\rm i} \bm k \cdot \bm x} \mid \bm k \in S(\Lambda^*) \right\}.
\]
It is clear that
\begin{equation}\label{numbs}
\dim(\mathbb E_1) = \# S(\Lambda^*).
\end{equation}
A detailed discussion of the dimension of $\mathbb{E}_1$ is provided in Lemma~\ref{lemdime1} in Section \ref{sec21}.

For the reader’s convenience, we present three representative examples below.

\begin{example}[Rectangular torus]\label{examp1}
Let
\[
\mathbb T=\mathbb R^2/\Lambda,\quad
\Lambda = \{ m\bm\xi + n\bm\eta \mid m,n \in \mathbb{Z} \}, \quad
\bm\xi = 2\pi (1,0), \quad
\bm\eta = h(0,1),
\]
where $0<h<2\pi.$
According to \eqref{dulbas},
\[
\Lambda^* = \{ m\bm\xi^* + n\bm\eta^* \mid m,n \in \mathbb{Z} \}, \quad
\bm\xi^* = \frac{1}{2\pi} (1,0), \quad
\bm\eta^* = \frac{1}{h} (0,1).
\]
It is clear that
\[\rho(\Lambda^*)=\frac{1}{2\pi},\quad S(\Lambda^*)=\left\{\pm \bm \xi^*\right\}.\]
Hence
\[ \lambda_1=1,\quad
\mathbb E_1 ={\rm span}\left\{\cos x_1,\sin x_1\right\}.
\]
\end{example}

\begin{example}[Square torus]\label{examp2}
Let
\begin{equation}\label{stll}
\mathbb T=\mathbb R^2/\Lambda,\quad \Lambda = \{ m\bm\xi + n\bm\eta \mid m,n \in \mathbb{Z} \}, \quad
\bm\xi = 2\pi (1,0), \quad \bm\eta = 2\pi (0,1).
\end{equation}
According to \eqref{dulbas},
\[
\Lambda^* = \{ m\bm\xi^* + n\bm\eta^* \mid m,n \in \mathbb{Z} \}, \quad
\bm\xi^* = \frac{1}{2\pi} (1,0), \quad \bm\eta^* = \frac{1}{2\pi} (0,1).
\]
It is clear that
\[\rho(\Lambda^*)=\frac{1}{2\pi},\quad S(\Lambda^*)=\left\{\pm\bm \xi^*, \pm\bm \eta^*\right\}.\]
Hence
\[ \lambda_1=1,\quad \mathbb E_1={\rm span}\left\{\cos x_1,\sin x_1,\cos x_2,\sin x_2\right\}.\]
\end{example}

\begin{definition}[Hexagonal torus]\label{defet}
  If $\Lambda$  has a basis  $(\bm{\xi},\bm{\eta})$ such that $\bm{\xi},\bm{\eta}$ have equal lengths and form an angle of $\pi/3$, then $\Lambda$ is called a \emph{hexagonal lattice}; accordingly, $\mathbb T$ is called a \emph{hexagonal torus}.
\end{definition}

\begin{example}[Hexagonal torus]\label{examp3}
Let
\begin{equation}\label{eisto1}
  \mathbb T=\mathbb R^2/\Lambda,\quad  \Lambda=\{m\bm\xi+n\bm\eta\mid m,n\in\mathbb Z\},\quad \bm\xi=2\pi(1,0),\quad \bm\eta=2\pi\left(\frac{1}{2},\frac{\sqrt{3}}{2}\right).
    \end{equation}
According to \eqref{dulbas},
\[
\Lambda^*=\{m\bm\xi^*+n\bm\eta^*\mid m,n\in\mathbb Z\},\quad \bm\xi^*=\frac{1}{2\pi}\left(1,-\frac{1}{\sqrt{3}}\right),\quad \bm\eta^*=\frac{1}{2\pi}\left(0,\frac{2}{\sqrt{3}}\right).
\]
It is clear that
\[\rho(\Lambda^*)=\frac{1}{\sqrt{3}\pi},\quad S(\Lambda^*)=\left\{\pm\bm\xi^*, \pm\bm\eta^*, \pm(\bm\xi^*+\bm\eta^*)\right\}.\]
Hence $\lambda_1=4/3$, and $\mathbb E_1$ is spanned by the following six functions:
\[
 \cos\left(x_1-\frac{x_2}{\sqrt{3}}\right),
\sin\left(x_1-\frac{x_2}{\sqrt{3}}\right),
\cos\left(\frac{2 x_2}{\sqrt{3}}\right),
\sin\left( \frac{2 x_2}{\sqrt{3}}\right),  \cos\left(x_1+\frac{x_2}{\sqrt{3}}\right),
\sin\left(x_1+\frac{x_2}{\sqrt{3}}\right).
\]
\end{example}

The streamlines of the first eigenstates on a hexagonal torus can be very different from those on a rectangular or square torus. For example, on the hexagonal torus \eqref{eisto1}, the eigenfunction
\begin{equation}\label{sdpt}
\cos\left(x_1-\frac{x_2}{\sqrt{3}}\right)+\cos\left(\frac{2 x_2}{\sqrt{3}}\right)+\cos\left(x_1+\frac{x_2}{\sqrt{3}}\right)
\end{equation}
 has one maximum point, two minimum points, and three saddle points, and the corresponding flow contains one large positive vortex and two small negative vortices. In contrast, on the square torus \eqref{stll}, the eigenfunction
$\cos x_1 +\cos x_2$
has one maximum point, one minimum point, and two saddle points, and the corresponding flow contains two opposite-signed vortices of equal size.

\subsection{Main theorem}\label{sec13}

Throughout this paper, let \(1 < p < \infty\) be fixed.
For convenience,  we place a small circle above  a given function space to denote its subspace  of  mean-zero functions; for example,
\begin{equation}\label{mr1}
\mathring L^p(\mathbb T)=\left\{f\in L^p(\mathbb T) \Bigm| \int_{\mathbb T}f d\bm x=0\right\},\quad \mathring W^{2,p}(\mathbb T)=\left\{f\in W^{2,p}(\mathbb T) \Bigm| \int_{\mathbb T}f d\bm x=0\right\}.
\end{equation}

To make our stability result more general, we first introduce the notion of $L^p$-admissible map.
\begin{definition}[$L^p$-admissible map]\label{defam}
If \(\zeta \in C(\mathbb{R}; \mathring{L}^p(\mathbb{T}))\) satisfies
\[
E(\zeta(t)) = E(\zeta(0)),\footnote{The functional $E$ is well defined on $\mathring{L}^p(\mathbb{T})$; see Appendix \ref{apdx1}.} \quad \zeta(t) \in \mathcal{R}_{\zeta(0)}
\]
for any \(t \in \mathbb{R}\), then \(\zeta\) is called an \emph{$L^p$-admissible map}.
\end{definition}

By (C1) and (C2) in the previous subsection, \(\zeta(t) := \omega(t, \cdot)\) is an $L^p$-admissible map for any sufficiently smooth solution \(\omega\) of the Euler equation \eqref{voreq}.

The main theorem of this paper is as follows.

\begin{theorem}\label{thm1}
Every $\bar\omega\in\mathbb E_1$ is stable up to translations in the following sense:  for any  $\varepsilon>0,$ there exists some $\delta>0,$ such that for any $L^p$-admissible map $\zeta(t)$ in the sense of Definition \ref{defam}, if
\[\left\|\zeta(0)-\bar\omega\right\|_{L^p(\mathbb T)}<\delta,\]
then for any $t\in\mathbb R$, there exists some $\bm p\in \mathbb R^2$ such that
\[ \left\|\zeta(t)-\bar\omega(\cdot-\bm p)\right\|_{L^p(\mathbb T)}<\varepsilon.\]
\end{theorem}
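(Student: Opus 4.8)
The plan is to run the rearrangement/energy method in the form of Burton's stability criterion, reducing the statement to two ingredients: a variational characterisation of $\bar\omega$ within its rearrangement class, and a count of the translational orbits contained in that class. For the \emph{variational characterisation}, note first that every element of $\mathcal R_{\bar\omega}$ has the same distribution as the trigonometric polynomial $\bar\omega$, hence lies in $L^\infty(\mathbb T)\cap L^2(\mathbb T)$ with $\|\omega\|_{L^2}=\|\bar\omega\|_{L^2}$. Expanding $E$ in the basis $\{e^{2\pi{\rm i}\bm k\cdot\bm x}\}$ and using that $\mathsf G$ acts by the multiplier $(4\pi^2|\bm k|^2)^{-1}$, I would obtain
\[
E(\omega)\le \frac{1}{2\lambda_1}\|\omega\|_{L^2(\mathbb T)}^2,
\]
with equality if and only if the Fourier support of $\omega$ is contained in $S(\Lambda^*)$, i.e.\ $\omega\in\mathbb E_1$. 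Since the right-hand side is constant on $\mathcal R_{\bar\omega}$, it follows that $\bar\omega$ maximises $E$ over $\mathcal R_{\bar\omega}$ and that the maximiser set is exactly $M:=\mathcal R_{\bar\omega}\cap\mathbb E_1$. Using the majorisation description of the weak closure $\overline{\mathcal R_{\bar\omega}}^{\,w}$ (on which $\|\cdot\|_{L^2}$ can only decrease), the same inequality shows that $M$ is also the full maximiser set over $\overline{\mathcal R_{\bar\omega}}^{\,w}$.

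For \emph{stability up to $M$} I would argue by contradiction and compactness. Suppose there are $L^p$-admissible maps $\zeta_n$ and times $t_n$ with $\zeta_n(0)\to\bar\omega$ in $L^p$ but $\omega_n:=\zeta_n(t_n)$ bounded away from every translate of $\bar\omega$. Since $\omega_n\in\mathcal R_{\zeta_n(0)}$, the sequence is bounded in $L^p$, so after passing to a subsequence $\omega_n\rightharpoonup\omega_*$ weakly. Because $\mathsf G$ is smoothing (hence $E$ is weakly sequentially continuous), and $E(\omega_n)=E(\zeta_n(0))\to E(\bar\omega)$, we get $E(\omega_*)=E(\bar\omega)$; passing the majorisation constraints to the limit (here $\zeta_n(0)\to\bar\omega$ in $L^p$ is used) gives $\omega_*\in\overline{\mathcal R_{\bar\omega}}^{\,w}$, so by the first paragraph $\omega_*\in M$. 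In particular $\|\omega_*\|_{L^p}=\|\bar\omega\|_{L^p}=\lim_n\|\omega_n\|_{L^p}$, and since $L^p$ is uniformly convex for $1<p<\infty$, weak convergence together with convergence of norms upgrades to $\omega_n\to\omega_*$ strongly in $L^p$. Thus the perturbed flow must approach $M$.

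The \emph{crux}, and where the real difficulty lies, is to show that $M=\mathcal R_{\bar\omega}\cap\mathbb E_1$ is a \emph{finite} union of disjoint translational orbits $O_1,\dots,O_m$, each the continuous image of the compact torus and therefore compact and mutually separated in $L^p$, with $O_1$ the orbit $O_{\bar\omega}$ of $\bar\omega$. Writing an element of $\mathbb E_1$ through the amplitudes and phases of the modes $\bm k\in S(\Lambda^*)$, a translation by $\bm p$ multiplies the $\bm k$-coefficient by $e^{-2\pi{\rm i}\bm k\cdot\bm p}$, so it leaves the amplitudes invariant and shifts the phases linearly in $\bm k$; hence each orbit is pinned down by a finite list of translation invariants. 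Equimeasurability forces all Casimirs $\int_{\mathbb T}F(\omega)\,d\bm x$ to coincide, which after expansion becomes a system of polynomial equations in these invariants. For the rectangular and square tori this system is easy. For the hexagonal torus the three shortest dual vectors satisfy a resonance relation $\bm k_1+\bm k_2=\bm k_3$, so the invariants are the three modal amplitudes \emph{together with} a resonant phase combination $\Psi=\psi_1+\psi_2-\psi_3$, and already the cubic moment $\int_{\mathbb T}\omega^3$ couples them through a term proportional to $A_1A_2A_3\cos\Psi$. Analysing the resulting polynomial system and showing that its only solutions are the images of $\bar\omega$ under the finite isometry group of the hexagonal lattice (so that $m$ is finite) is the main obstacle.

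Granting this, the \emph{conclusion} is a soft continuity argument. Fix $\varepsilon$ smaller than half the minimal $L^p$-separation of the orbits $O_1,\dots,O_m$ and choose $\sigma\in(\varepsilon,\ \text{half-separation})$. By the compactness step, once $\zeta(0)$ is close enough to $\bar\omega$ we have $\mathrm{dist}_{L^p}(\zeta(t),M)<\varepsilon$ for all $t$, and $\zeta(0)$ starts within $\varepsilon$ of $O_1$. The sets $U_j:=\{t\in\mathbb R:\mathrm{dist}_{L^p}(\zeta(t),O_j)<\sigma\}$ are open by continuity of $t\mapsto\zeta(t)$, pairwise disjoint by the separation, and cover $\mathbb R$; since $\mathbb R$ is connected, exactly one is nonempty, namely the one containing $t=0$, i.e.\ $U_1$. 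Therefore $\zeta(t)$ stays within $\varepsilon$ of $O_{\bar\omega}$ for all $t$, which is precisely orbital stability up to translation. The hardest part of the whole argument is the hexagonal orbit count in the third paragraph.
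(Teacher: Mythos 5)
Your overall architecture coincides with the paper's: characterize $\mathcal R_{\bar\omega}\cap\mathbb E_1$ as the maximizer set of $E$ over $\mathcal R_{\bar\omega}$ via the energy--enstrophy inequality, invoke a Burton-type compactness/stability statement for that maximizer set, show the set splits into finitely many compact translational orbits, and finish with a connectedness argument. The first, second, and fourth of these are fine as sketched. The problem is the third: you correctly identify the finiteness of the orbit decomposition in the hexagonal ($\dim\mathbb E_1=6$) case as ``the crux'' and ``the main obstacle,'' and then you do not prove it --- your final paragraph begins ``Granting this.'' That is precisely the step the paper spends most of its effort on (Section~\ref{sec323} together with Appendices~\ref{apdx2} and~\ref{apdx3}), so as written the proposal has a genuine gap at its load-bearing point.

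Two further remarks on that missing step. First, you aim at a stronger statement than necessary: you propose to show that the solutions of the polynomial system are exactly the images of $\bar\omega$ under the isometry group of the hexagonal lattice. The paper only needs (and only proves) an upper bound on the number of solutions: using the Casimirs $\int_{\mathbb T}\omega^m$ for $m=2,3,4,6$ it derives the system \eqref{meq}, eliminates variables to reduce $(A_1^2,A_2^2,A_3^2)$ to the roots of a single cubic (Lemma~\ref{lemap3}), getting at most $6$ amplitude triples, and then uses the explicit characterization of translational orbits in $\mathbb E_1$ (Lemma~\ref{lemap2}(iii)) to show each triple carries at most $2$ orbits --- at most $12$ orbits in total, which is all that isolatedness of $\mathcal O_{\bar\omega}$ requires. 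Second, your identification of the translation invariants is slightly off: besides the three amplitudes, the paper's orbit invariant in the resonant case is the quantity $A_1A_2A_3e^{{\rm i}(\alpha_1+\alpha_2-\alpha_3)}$ (not merely $\cos\Psi$), and the degenerate case $A_1A_2=0$, where the phase invariant becomes vacuous, must be treated separately --- your sketch does not address this bifurcation. To close the gap you would need to actually carry out the moment computations, the elimination argument, and the orbit-counting via Lemma~\ref{lemap2}(iii), including the degenerate amplitudes.
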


 Here and throughout, points in $\mathbb R^2$ are always understood modulo $\Lambda.$

\begin{remark}
  For a rectangular or square torus, Theorem \ref{thm1} has  been proved in \cite{WZCV}.
\end{remark}

\begin{remark}\label{rmobt}
 Denote by $\mathcal O_{\bar\omega}$ the orbit of $\bar\omega$ under the action of the translation group, i.e.,
 \begin{equation}\label{tobt}
 \mathcal O_{\bar\omega}=\left\{\bar\omega(\cdot-\bm p) \mid \bm p\in \mathbb R^2\right\}.
 \end{equation}
 It is clear that $\mathcal O_{\bar\omega}$ is compact in $\mathring L^p(\mathbb T)$. The conclusion of Theorem \ref{thm1} can then be reformulated as follows: for any  $\varepsilon>0,$ there exists some $\delta>0,$ such that for any $L^p$-admissible map $\zeta(t)$ in the sense of Definition \ref{defam}, it holds that
\begin{equation}\label{sosv}
\min_{f\in\mathcal O_{\bar\omega}}\left\|\zeta(0)-f\right\|_{L^p(\mathbb T)}<\delta\quad\Longrightarrow\quad \min_{f\in\mathcal O_{\bar\omega}}\left\|\zeta(t)-f\right\|_{L^p(\mathbb T)}<\varepsilon\quad \forall\,t\in\mathbb R.
\end{equation}
\end{remark}

 Theorem \ref{thm1} establishes the existence of a family of orbitally stable sinusoidal steady states on a flat 2-torus of arbitrary shape. For a rectangular or square torus (see Examples \ref{examp1} and \ref{examp2}), this result is already known. However, for a hexagonal torus (see Example \ref{examp3}), such steady states have not, to the best of our knowledge, appeared in the literature.

\begin{corollary}[Orbitally stable sinusoidal states on a hexagonal torus]\label{coro1}
Suppose that $\Lambda$ is given by \eqref{eisto1}.
Consider a steady state $\bar\omega\in\mathbb E_1,$ which can be written as
\[\bar\omega=A\cos\left(x_1-\frac{x_2}{\sqrt{3}}+\alpha\right)
+B\cos\left(\frac{2x_2}{\sqrt{3}}+\beta \right) +C\cos\left(x_1+\frac{x_2}{\sqrt{3}}+\gamma\right),\]
where $A,B,C\ge 0$   and $\alpha,\beta,\gamma\in\mathbb R.$
Then,  for any  $\varepsilon>0,$ there exists some $\delta>0,$ such that for any $L^p$-admissible map $\zeta(t)$ in the sense of Definition \ref{defam}, if
\[\|\zeta(0)-\bar\omega\|_{L^p(\mathbb T)}<\delta,\]
then for any $t\in\mathbb R,$ there exists some $\tilde\omega\in\mathbb E_1$ of the form
\[
\tilde\omega=A\cos\left(x_1-\frac{x_2}{\sqrt{3}}+\tilde\alpha\right)
+B\cos\left(\frac{2 x_2}{\sqrt{3}}+\tilde\beta \right) +C\cos\left(x_1+\frac{x_2}{\sqrt{3}}+\tilde\gamma\right),
\]
where $\tilde\alpha,\tilde\beta,\tilde\gamma\in\mathbb R$ satisfy \begin{equation}\label{pyix1}
ABCe^{{\rm i}(\alpha+\beta-\gamma)}
=ABCe^{{\rm i}(\tilde\alpha+\tilde\beta-\tilde\gamma)},
\end{equation}
 such that
\[ \|\zeta(t)-\tilde\omega\|_{L^p(\mathbb T)}<\varepsilon.\]
\end{corollary}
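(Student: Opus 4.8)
The plan is to read off Corollary \ref{coro1} as an explicit transcription of Theorem \ref{thm1}, the only real work being to describe the translation orbit $\mathcal{O}_{\bar\omega}$ concretely on the hexagonal torus \eqref{eisto1}. From Example \ref{examp3} the three cosine modes of $\bar\omega$ correspond to the three pairs of shortest dual vectors, and I would write $\bm k_1 = \bm\xi^*$, $\bm k_2 = \bm\eta^*$, $\bm k_3 = \bm\xi^* + \bm\eta^*$, so that $\bar\omega = A\cos(2\pi\bm k_1\cdot\bm x + \alpha) + B\cos(2\pi\bm k_2\cdot\bm x + \beta) + C\cos(2\pi\bm k_3\cdot\bm x + \gamma)$. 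The single structural fact driving the whole argument is the linear relation $\bm k_3 = \bm k_1 + \bm k_2$.

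First I would compute how a translation acts on $\bar\omega$. For $\bm p \in \mathbb{R}^2$ the translate $\bar\omega(\cdot - \bm p)$ has the same amplitudes $A, B, C$ and shifted phases $\alpha \mapsto \alpha - 2\pi\bm k_1\cdot\bm p$, $\beta \mapsto \beta - 2\pi\bm k_2\cdot\bm p$, $\gamma \mapsto \gamma - 2\pi\bm k_3\cdot\bm p$; hence every element of $\mathcal{O}_{\bar\omega}$ is already of the form asserted for $\tilde\omega$. Using $\bm k_3 = \bm k_1 + \bm k_2$, the combination $\alpha + \beta - \gamma$ shifts by $-2\pi(\bm k_1 + \bm k_2 - \bm k_3)\cdot\bm p = 0$, so it is a translation invariant, which is exactly the content of \eqref{pyix1}. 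I would then note, for completeness, that since $(\bm k_1, \bm k_2)$ is a basis of $\Lambda^*$, the relations \eqref{dulbas2} show that $\bm p \mapsto (2\pi\bm k_1\cdot\bm p, 2\pi\bm k_2\cdot\bm p)$ descends to an isomorphism $\mathbb{R}^2/\Lambda \to (\mathbb{R}/2\pi\mathbb{Z})^2$; thus, when $A, B, C > 0$, the pair $(\tilde\alpha, \tilde\beta)$ sweeps the full $2$-torus while $\tilde\gamma$ is pinned by the invariant, and \eqref{pyix1} characterizes $\mathcal{O}_{\bar\omega}$ exactly.

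With this description the conclusion is immediate. Given $\varepsilon > 0$ I would take the $\delta$ furnished by Theorem \ref{thm1}; if $\|\zeta(0) - \bar\omega\|_{L^p(\mathbb{T})} < \delta$, then for each $t$ there is $\bm p \in \mathbb{R}^2$ with $\|\zeta(t) - \bar\omega(\cdot - \bm p)\|_{L^p(\mathbb{T})} < \varepsilon$, and the choice $\tilde\omega = \bar\omega(\cdot - \bm p)$ produces a function of the required form whose phases obey \eqref{pyix1} by the computation above.

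The remaining point to dispatch is the degenerate regime in which one or more of $A, B, C$ vanishes: then the corresponding phase is meaningless, but the prefactor $ABC$ in \eqref{pyix1} is zero, so the constraint collapses to the trivial identity and no spurious relation is imposed. I do not expect any genuine obstacle here; the whole of Corollary \ref{coro1} is a phase-bookkeeping exercise around the relation $\bm k_3 = \bm k_1 + \bm k_2$, and all of the analytic difficulty is already absorbed into Theorem \ref{thm1}.
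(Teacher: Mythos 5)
Your proposal is correct and follows essentially the same route as the paper: the corollary is obtained by applying Theorem \ref{thm1} to get a translate $\bar\omega(\cdot-\bm p)$ close to $\zeta(t)$, and then observing (as in Lemma \ref{lemap2}(iii)) that translation shifts the three phases by $-2\pi\bm k_i\cdot\bm p$ with $\bm k_3=\bm k_1+\bm k_2$, so that $\alpha+\beta-\gamma$ is invariant and \eqref{pyix1} holds, with the $ABC$ prefactor handling the degenerate cases. Your direct phase computation is exactly the one underlying the paper's appeal to Lemma \ref{lemap2}(iii).
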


\begin{remark}
The  constraint  \eqref{pyix1} is to ensure that $\tilde\omega$ is a translation of $\bar\omega$ (see Lemma \ref{lemap2}(iii) in Appendix \ref{apdx2}). If $ABC=0,$ then \eqref{pyix1} is inactive; if $ABC\neq 0,$ then \eqref{pyix1} is equivalent to
\[\alpha + \beta - \gamma \equiv \tilde\alpha + \tilde\beta - \tilde\gamma \pmod{2\pi}.\]
\end{remark}

Recently, Jeong, Yao, and Zhou \cite{JYZ} showed on the standard square torus that, under an additional nondegeneracy assumption excluding smaller spatial periods, an orbitally stable steady state with a saddle point can be used to construct an $L^\infty$-open set of initial data exhibiting superlinear growth of the vorticity gradient. The states covered by Corollary \ref{coro1}, such as \eqref{sdpt}, provide natural candidates for extending this construction to a hexagonal torus; such an extension would require verifying the hypotheses and adapting the argument of \cite{JYZ} to this geometry.

\subsection{Comments and outline of the proof}\label{sec14}

To study the Lyapunov stability of a steady state of a two-dimensional ideal fluid, an effective approach is to use the conservation laws of the Euler equation to control the deviation of any perturbed solution from the steady state. The earliest use of this approach can be traced back to Arnold’s work \cite{A1,A2} in the 1960s, where he proposed the famous energy-Casimir (EC) functional method. For Laplacian eigenstates, the related Casimir is the enstrophy, i.e., the $L^2$-norm of the vorticity. By applying the conservations of the kinetic energy and the enstrophy, it can be shown that for any $\bar\omega\in\mathbb E_1$, the set
\[\mathcal S_{\bar\omega}:=\left\{f\in \mathbb E_1\mid \|f\|_{L^2(\mathbb T)}=\|\bar\omega\|_{L^2(\mathbb T)}\right\}\]
is  stable as in \eqref{sosv} with $p=2$.
Note that all the states in $\mathcal S_{\bar\omega}$ have the same kinetic energy and enstrophy. To distinguish between different states in $\mathcal{S}_{\bar\omega}$ for a square torus, Wirosoetisno and Shepherd \cite{WS99} presented an analysis involving higher-order (cubic, quartic, and quintic) Casimirs. However, their formulation of orbital stability depends on higher-order Casimirs, and complete orbital stability therefore remains unclear. The first complete orbital stability result, measured in the $L^p$-norm of the vorticity for any $1 < p < \infty$, was proved in \cite{WZCV} for both the rectangular and the square torus.
In contrast to the approach of Wirosoetisno and Shepherd \cite{WS99}, the proof in \cite{WZCV} was achieved within the framework of Burton’s stability theory, with a key ingredient being the analysis of the equimeasurable partition of the first eigenspace.
Subsequently,   Elgindi \cite{ElCPAA} obtained a quantitative $L^2$-stability result by improving Wirosoetisno and Shepherd's argument for the square torus.

Our approach to proving Theorem \ref{thm1} is primarily inspired by \cite{WZCV} and can be outlined in the following three steps:
\begin{itemize}
  \item[(1)] \emph{Variational characterization for the equimeasurable class $\mathcal C_{\bar\omega}$ of $\bar\omega$ in $\mathbb E_1$.} The equimeasurable class $\mathcal C_{\bar\omega}$ of $\bar\omega$ in $\mathbb E_1$ is defined as the set of all functions in $\mathbb E_1$ that are equimeasurable with $\bar\omega$, or equivalently,
  \begin{equation}\label{cobt}
  \mathcal C_{\bar\omega} = \mathcal R_{\bar\omega} \cap \mathbb E_1.
  \end{equation}
   By applying the energy-enstrophy inequality, we show that $\mathcal C_{\bar\omega}$ can be characterized via the conserved quantities of the Euler equation; more precisely, $\mathcal C_{\bar\omega}$ is exactly the set of maximizers of the kinetic energy $E$ relative to the rearrangement class $\mathcal R_{\bar\omega}$. The step is carried out in Section \ref{sec31}.

  \item[(2)] \emph{Isolatedness of the translational orbit $\mathcal O_{\bar\omega}$ in $\mathcal C_{\bar\omega}$.} This is the most challenging step, and is carried out in Section \ref{sec32}.
The main difficulty arises in the hexagonal case.
There the first eigenspace is six-dimensional and its three positive wave
vectors satisfy the resonance relation
$\bm k_3=\bm k_1+\bm k_2$. Three Fourier phases are therefore acted on by
only two translation parameters, leaving
$\alpha_1+\alpha_2-\alpha_3$ invariant. Using the Casimirs of orders $2,3,4$, and $6$, we derive the polynomial system \eqref{ase00}; Lemma \ref{lemap3} shows that this system has only finitely many solutions.
This is the main technical issue, which does not arise for the rectangular or square tori considered in \cite{WZCV}.
The lattice classification in Lemma \ref{lemdime1} and the
translation criterion in Lemma \ref{lemap2} make this reduction precise.
 \item[(3)] \emph{Application of a Burton-type stability criterion.}   In the spirit of Burton \cite{BAR}, it can be shown that the set of maximizers of $E$ relative to $\mathcal R_{\bar\omega}$ is stable under the Euler dynamics (see Proposition \ref{propbsc} in Section \ref{sec23}). Combining the previous two steps with a continuity argument, we conclude that the translational orbit $\mathcal O_{\bar\omega}$ is also stable under the Euler dynamics.
\end{itemize}

Note that the above approach is also effective for addressing the stability of Laplacian eigenstates in other symmetric domains, such as a disk \cite{Wdisk}, a rotating sphere \cite{CWZ}, and a finite periodic channel \cite{Wchanl}. In addition, this approach can yield $L^p$-stability, which seems difficult to achieve using the methods in \cite{ElCPAA,WS99}.

Finally, we emphasize that the stability of an individual representative, without quotienting out translations, remains an open problem.
Indeed, it is impossible to distinguish between two states in $\mathcal O_{\bar\omega}$ using only the conservation laws (C1) and (C2).
See also \cite{ElCPAA} for a more detailed discussion of this issue.

This paper is organized as follows. In Section~\ref{sec2}, we present some preliminary results that will be used throughout the paper. In Section~\ref{sec3}, we prove Theorem~\ref{thm1}. Section~\ref{sec4} establishes an interesting rigidity result that characterizes the borderline case of Arnold-stable states. For clarity, the proofs of several lemmas are deferred to the three appendices.

\section{Preliminaries}\label{sec2}

\subsection{Laplacian eigenvalue problem on $\mathbb T$}\label{sec21}

The results in this subsection may be familiar to experts, but we provide detailed proofs for the reader’s convenience.

\begin{lemma}\label{lemeigen1}
The set of  eigenvalues
 for \eqref{evp} is given by \eqref{evevp},
and the eigenspace related to some eigenvalue $\lambda$  is given by \eqref{efevp}.
\end{lemma}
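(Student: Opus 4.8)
The plan is to reduce the completeness question to the classical Fourier theory on the standard torus via a linear change of variables, and then read off the eigenfunctions and eigenvalues by orthogonality.

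First I would verify that each proposed function is genuinely a solution. For $\bm k\in\Lambda^*$, set $e_{\bm k}(\bm x)=e^{2\pi{\rm i}\bm k\cdot\bm x}$. The defining conditions $\bm k\cdot\bm\xi,\ \bm k\cdot\bm\eta\in\mathbb Z$ guarantee that $e_{\bm k}(\bm x+m\bm\xi+n\bm\eta)=e_{\bm k}(\bm x)$, so $e_{\bm k}$ descends to a smooth function on $\mathbb T$; a direct computation gives $-\Delta e_{\bm k}=4\pi^2|\bm k|^2e_{\bm k}$, and for $\bm k\neq(0,0)$ integration over a fundamental domain yields $\int_{\mathbb T}e_{\bm k}\,d\bm x=0$. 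Thus each $e_{\bm k}$ with $\bm k\in\Lambda^*\setminus\{(0,0)\}$ solves \eqref{evp} with eigenvalue $4\pi^2|\bm k|^2$, which accounts for the inclusions ``$\supseteq$'' in \eqref{efevp} and \eqref{evevp}.

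The substance of the lemma is the converse: that these exhaust the spectrum. For this I would establish that $\{e_{\bm k}\mid\bm k\in\Lambda^*\}$ is a complete orthogonal system in $L^2(\mathbb T)$. Introduce the linear isomorphism $\Phi(\bm y)=y_1\bm\xi+y_2\bm\eta$, which carries the standard lattice $\mathbb Z^2$ onto $\Lambda$ and hence induces a diffeomorphism $\mathbb R^2/\mathbb Z^2\to\mathbb T$. Using the duality relations \eqref{dulbas2} one checks that $\Phi^{-1}(\bm x)=(\bm\xi^*\cdot\bm x,\ \bm\eta^*\cdot\bm x)$, so that for $\bm n\in\mathbb Z^2$ the pullback of the standard character $e^{2\pi{\rm i}\bm n\cdot\bm y}$ equals $e_{\bm k}$ with $\bm k=n_1\bm\xi^*+n_2\bm\eta^*\in\Lambda^*$; as $\bm n$ runs over $\mathbb Z^2$, $\bm k$ runs over all of $\Lambda^*$, since $(\bm\xi^*,\bm\eta^*)$ is a basis of $\Lambda^*$. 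Because the characters $\{e^{2\pi{\rm i}\bm n\cdot\bm y}\}$ form a complete orthonormal basis of $L^2(\mathbb R^2/\mathbb Z^2)$ (classical, e.g.\ by density of trigonometric polynomials), completeness of $\{e_{\bm k}\}$ in $L^2(\mathbb T)$ follows after the change of variables, with orthogonality immediate since for distinct $\bm k,\bm k'$ the difference $\bm k-\bm k'$ is a nonzero element of $\Lambda^*$ and hence $e_{\bm k-\bm k'}$ has zero mean.

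Finally I would conclude by expanding an arbitrary solution. Any eigenfunction of $-\Delta$ is smooth by elliptic regularity, so a mean-zero solution $u$ of \eqref{evp} admits a Fourier expansion $u=\sum_{\bm k\in\Lambda^*\setminus\{(0,0)\}}c_{\bm k}e_{\bm k}$, with the constant term absent by the mean-zero constraint. Applying $-\Delta$ term by term and matching coefficients with $\lambda u$ by orthogonality forces $(4\pi^2|\bm k|^2-\lambda)c_{\bm k}=0$ for every $\bm k$; hence $c_{\bm k}=0$ unless $4\pi^2|\bm k|^2=\lambda$, which shows $\lambda$ must lie in \eqref{evevp} and that $u$ is a linear combination of the $e_{\bm k}$ with $|\bm k|^2=\lambda/(4\pi^2)$. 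I expect the only genuine obstacle to be the completeness statement; once the change of variables reduces it to the standard torus it becomes routine, and the remaining regularity and orthogonality points are standard.
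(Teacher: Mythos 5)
Your proposal is correct and follows essentially the same route as the paper: verify directly that each $e^{2\pi{\rm i}\bm k\cdot\bm x}$ with $\bm k\in\Lambda^*\setminus\{(0,0)\}$ is an eigenfunction, then reduce completeness of this family in $\mathring L^2(\mathbb T)$ to the classical Fourier basis on the standard torus via the change of variables $\bm x=y_1\bm\xi+y_2\bm\eta$ together with the duality relations \eqref{dulbas2}. Your final step of expanding an arbitrary eigenfunction and matching coefficients merely makes explicit what the paper leaves implicit after establishing completeness.
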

\begin{proof}
It is clear that $e^{2\pi {\rm i}\bm k\cdot \bm x}$   is  an eigenfunction of \eqref{evp} for any $\bm k\in\Lambda^*\setminus\{(0,0)\}$ with  $4\pi^2|\bm k|^2$ being the associated eigenvalue.   Notice that \eqref{evp} is equivalent to the following operator equation:
\[v=\lambda\mathsf G v,\quad v\in\mathring L^2(\mathbb T),\]
where $\mathsf G:\mathring L^2(\mathbb T)\to \mathring L^2(\mathbb T)$ is compact and symmetric (cf. Appendix \ref{apdx1}).
According to the Hilbert-Schmidt theory, it suffices to show that $\left\{e^{2\pi {\rm i}\bm k\cdot \bm x}\mid \bm k\in\Lambda^*\setminus\{(0,0)\} \right\}$ is complete in $\mathring L^2(\mathbb T)$; or equivalently, for any $f\in  \mathring L^2(\mathbb T)$ satisfying
\begin{equation}\label{equ0}
\int_{\mathbb T}f(\bm x)e^{2\pi {\rm i}\bm k\cdot \bm x}d\bm x=0\quad\forall\,\bm k\in\Lambda^*\setminus\{(0,0)\},
\end{equation}
it holds that $f=0$ a.e. on $\mathbb T.$ Note that \eqref{equ0} can be written as
\begin{equation}\label{equ01}
\int_{\mathbb T}f(\bm x)e^{2\pi {\rm i}(m\bm\xi^*+n\bm\eta^*)\cdot \bm x}d\bm x=0\quad\forall\,(m,n)\in\mathbb Z^2\setminus\{(0,0)\},
\end{equation}
where $\bm\xi^*$ and $\bm\eta^*$ are given by \eqref{dulbas}.
By the change of variables
\[\bm x=y_1\bm\xi+y_2\bm\eta,\quad 0<y_1, y_2<1, \]
and using \eqref{dulbas2},
\eqref{equ01} becomes
\[
\int_0^1\int_0^1f(y_1\bm\xi+y_2\bm\eta)e^{2\pi {\rm i}\bm k\cdot\bm y }dy_1 dy_2=0\quad\forall\, \bm k\in\mathbb Z^2\setminus\{(0,0)\}.
\]
Since $\left\{e^{2\pi {\rm i} \bm k\cdot\bm x}\mid \bm k\in\mathbb Z^2\setminus\{(0,0)\}\right\}$ forms an orthonormal basis of $\mathring L^2((0,1)\times(0,1))$ (see \cite[p. 32]{Ti}), we further deduce that  $f(y_1\bm\xi+y_2\bm\eta)=0$ for a.e. $(y_1,y_2)\in(0,1)\times(0,1)$, and hence $f=0$ a.e. on $\mathbb T$. This completes the proof.
\end{proof}

\begin{lemma}[Dimension of $\mathbb E_1$]\label{lemdime1}
The dimension of $\mathbb{E}_1$ is either 2, 4, or 6. Moreover,
\begin{itemize}
\item[(i)] If $\dim(\mathbb{E}_1)=2$, then there exists some nonzero vector $\bm k$ such that
    \[S(\Lambda^*)=\{\pm\bm k\}.\]
     Accordingly,
    \[\mathbb E_1={\rm span}\left\{\cos(2\pi\bm k\cdot\bm x), \sin(2\pi\bm k\cdot\bm x)\right\}.\]
\item[(ii)] If $\dim(\mathbb{E}_1)=4$, then there exist  two linearly independent vectors $\bm k_1, \bm k_2$ satisfying $|\bm k_1|=|\bm k_2|$ such that \[S(\Lambda^*)=\{\pm\bm k_1, \pm\bm k_2\}.\]
    Accordingly,
    \[\mathbb E_1 ={\rm span}\left\{\cos(2\pi \bm k_1\cdot\bm x), \sin(2\pi \bm k_1\cdot\bm x), \cos(2\pi \bm k_2\cdot\bm x), \sin(2\pi \bm k_2\cdot\bm x)\right\}.\]
\item[(iii)] If $\dim(\mathbb E_1)=6$, then $\Lambda^*$ is a hexagonal lattice, and there exist three nonzero vectors  $\bm k_1,\bm k_2, \bm k_3$ satisfying $|\bm k_1|=|\bm k_2|=|\bm k_3|$ and $\bm k_3=\bm k_1+\bm k_2$ such that
\[S(\Lambda^*)=\left\{ \pm\bm k_1,\pm\bm k_2, \pm\bm k_3 \right\}.\]
Moreover, $(\bm k_1,\bm k_2)$ is a basis of $\Lambda^*$.
Accordingly, $\mathbb E_1$ is spanned by the following six functions:
\[ \cos(2\pi \bm k_1\cdot\bm x), \, \sin(2\pi \bm k_1\cdot\bm x), \,\cos(2\pi \bm k_2\cdot\bm x),\, \sin(2\pi \bm k_2\cdot\bm x), \,\cos(2\pi \bm k_3\cdot\bm x),\, \sin(2\pi \bm k_3\cdot\bm x).\]
    \end{itemize}
  \end{lemma}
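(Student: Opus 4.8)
The plan is to translate the entire statement into the geometry of the set $S(\Lambda^*)$ of shortest nonzero dual vectors. By \eqref{numbs} one has $\dim(\mathbb E_1)=\#S(\Lambda^*)$, and each antipodal pair $\{\bm k,-\bm k\}\subset S(\Lambda^*)$ contributes exactly the real two-dimensional subspace ${\rm span}\{\cos(2\pi\bm k\cdot\bm x),\sin(2\pi\bm k\cdot\bm x)\}$ of $\mathbb E_1$. Since $\bm k\in S(\Lambda^*)$ if and only if $-\bm k\in S(\Lambda^*)$, and $\bm 0\notin S(\Lambda^*)$, the shortest vectors split into antipodal pairs, so $\#S(\Lambda^*)=2m$ for some integer $m\ge 1$. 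It therefore suffices to show $m\le 3$ and to describe the configuration of $S(\Lambda^*)$ in the three cases $m=1,2,3$; the stated bases and spanning sets then follow immediately.

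The key geometric input is an angle restriction. Set $\rho:=\rho(\Lambda^*)$ and let $\bm k,\bm k'\in S(\Lambda^*)$ be distinct and non-antipodal. Then $\bm k\pm\bm k'$ are nonzero elements of $\Lambda^*$, hence $|\bm k\pm\bm k'|\ge\rho$; expanding $|\bm k\pm\bm k'|^2=2\rho^2\pm 2\,\bm k\cdot\bm k'$ yields $|\bm k\cdot\bm k'|\le\rho^2/2$, so the angle between any two such vectors lies in $[\pi/3,2\pi/3]$. To bound $m$, I would choose a line through the origin avoiding the finitely many points of $S(\Lambda^*)$, so that exactly one representative of each antipodal pair lies in a fixed open half-plane. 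Ordering these $m$ representatives by argument $\phi_1<\cdots<\phi_m$ inside an arc of length $<\pi$, each consecutive gap $\phi_{i+1}-\phi_i$ equals the angle between the corresponding vectors (the two are distinct and non-antipodal since they lie in the same open half-plane) and is therefore $\ge\pi/3$. Hence $\pi>\phi_m-\phi_1\ge(m-1)\pi/3$, forcing $m\le 3$ and establishing $\dim(\mathbb E_1)\in\{2,4,6\}$.

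The cases $m=1,2$ are then immediate: for $m=1$ set $S(\Lambda^*)=\{\pm\bm k\}$; for $m=2$ write $S(\Lambda^*)=\{\pm\bm k_1,\pm\bm k_2\}$, where $\bm k_1,\bm k_2$ are linearly independent because two parallel vectors of equal length must be equal or antipodal. For $m=3$ the six shortest vectors lie on the circle of radius $\rho$ with six consecutive angular gaps, each $\ge\pi/3$ and summing to $2\pi$, so every gap equals $\pi/3$; thus $S(\Lambda^*)$ is the set of vertices of a regular hexagon. Choosing $\bm k_1\in S(\Lambda^*)$ and the element $\bm k_2\in S(\Lambda^*)$ at angle $2\pi/3$ from it, the bisecting vector has length $\rho$, since $|\bm k_1+\bm k_2|^2=2\rho^2+2\rho^2\cos(2\pi/3)=\rho^2$, and so lies in $S(\Lambda^*)$; I set $\bm k_3:=\bm k_1+\bm k_2$, which is pairwise non-parallel with $\bm k_1,\bm k_2$ and satisfies $|\bm k_1|=|\bm k_2|=|\bm k_3|=\rho$.

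The main obstacle is the remaining assertion that $\Lambda^*$ is \emph{genuinely} a hexagonal lattice, i.e. that the pair $(\bm k_1,\bm k_3)$, which has equal side lengths $\rho$ and included angle $\pi/3$, is an honest basis of $\Lambda^*$ rather than of a proper finite-index sublattice. I would settle this by a self-contained covering argument. The fundamental rhombus spanned by $\bm k_1,\bm k_3$ has its short diagonal $\bm k_3-\bm k_1$ of length $\rho$, hence decomposes into two equilateral triangles of side $\rho$; each such triangle is covered by the three closed discs of radius $\rho/\sqrt3$ centered at its vertices (the circumcenter being the farthest point, at distance exactly $\rho/\sqrt3$). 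Consequently every point of the plane lies within distance $\rho/\sqrt3<\rho$ of some corner of a translated rhombus, all corners being points of $M:=\mathbb Z\bm k_1+\mathbb Z\bm k_3\subseteq\Lambda^*$. If some $\bm w\in\Lambda^*\setminus M$ existed, reducing it into the closed rhombus modulo $M$ would give a nonzero element of $\Lambda^*$ of length $\le\rho/\sqrt3<\rho$, contradicting the minimality of $\rho$. Therefore $\Lambda^*=M$, so $(\bm k_1,\bm k_3)$ is a basis exhibiting the hexagonal structure, and the spanning description of $\mathbb E_1$ in case (iii) follows. The only other point requiring care is the genericity of the separating line in the counting step, which is routine since $S(\Lambda^*)$ is finite.
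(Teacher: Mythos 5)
Your proof is correct and follows essentially the same route as the paper: the identity $\dim(\mathbb E_1)=\#S(\Lambda^*)$, antipodal pairing, and the bound $\bm k\cdot\bm k'\le\rho^2/2$ coming from $\bm k-\bm k'\in\Lambda^*$, which forces at most six shortest vectors and, in the six-vector case, a regular hexagonal configuration. You additionally supply two details that the paper only asserts --- the half-plane argument ordering that makes the count $m\le 3$ rigorous, and the covering argument showing that $(\bm k_1,\bm k_3)$ actually generates $\Lambda^*$ (so that $\Lambda^*$ is a hexagonal lattice in the paper's sense, not merely that $S(\Lambda^*)$ forms a regular hexagon) --- both of which are correct and strengthen the exposition.
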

\begin{proof}
By \eqref{numbs}, $\dim\mathbb E_1=\#S(\Lambda^*)$. Since the set
$S(\Lambda^*)$ is centrally symmetric (i.e., $\bm k\in S(\Lambda^*)$ if and only if $-\bm k\in S(\Lambda^*)$), its cardinality is even. If
$\bm k,\bm l\in S(\Lambda^*)$ are distinct, then
$\bm k-\bm l\in\Lambda^*\setminus\{0\}$ and hence
\[
 |\bm k-\bm l|\geq\rho(\Lambda^*)=|\bm k|=|\bm l|.
\]
Thus the angle between consecutive shortest vectors is at least $\pi/3$,
and there are at most six of them. This proves the alternatives in (i)-(iii),
except for the final basis assertion.

In the six-vector case the angular gaps are all $\pi/3$. Choose
$\bm k_1$ and $\bm k_2$ with angle $2\pi/3$ and set
$\bm k_3=\bm k_1+\bm k_2$. Then
$\bm k_1\cdot\bm k_2=-\rho(\Lambda^*)^2/2$, so $\bm k_3$ is another
shortest vector and the displayed description of $S(\Lambda^*)$ follows.
Let $L_0=\mathbb Z\bm k_1+\mathbb Z\bm k_2$. If
$L_0\ne\Lambda^*$, choose $\bm q_0\in\Lambda^*\setminus L_0$ and reduce it
modulo $L_0$ to a nonzero vector $\bm q=a\bm k_1+b\bm k_2$ with
$|a|,|b|\leq1/2$. But then
\[
 0<|\bm q|^2
 =\rho(\Lambda^*)^2(a^2+b^2-ab)
 \leq\frac34\rho(\Lambda^*)^2,
\]
contradicting the definition of $\rho(\Lambda^*)$. Hence
$L_0=\Lambda^*$. Since $\bm k_3=\bm k_1+\bm k_2$, the pair
$(\bm k_1,\bm k_3)$ is also a basis of $\Lambda^*$, and
$\bm k_1\cdot\bm k_3=\rho(\Lambda^*)^2/2$. Thus this basis has equal
lengths and angle $\pi/3$, proving that $\Lambda^*$ is hexagonal.

\end{proof}

\begin{remark}
  If $\dim(\mathbb E_1)=6$, then $\mathbb T$ must be a hexagonal  torus. Indeed, if $\dim(\mathbb E_1)=6$, then $\Lambda^{*}$ is a hexagonal  lattice by Lemma \ref{lemdime1}(iii), and thus $\Lambda^{**}$ is also a hexagonal  lattice. Since
$\Lambda = \Lambda^{**}$ (which follows directly from the definition of the dual lattice), we see that $\Lambda$ is also a hexagonal  lattice.
\end{remark}

\subsection{Energy-enstrophy inequality}\label{sec22}

Recall the following Poincar\'e inequality, which can be proved via an eigenfunction expansion or a standard variational argument.
\begin{lemma}[Poincar\'e inequality]\label{lempoincare}
For any $u\in  \mathring H^1(\mathbb T)$, it holds that
 \begin{equation}\label{pocar1}
 \lambda_1\int_{\mathbb T} u^2d\bm x\leq \int_{\mathbb T}|\nabla u|^2d\bm x,
 \end{equation}
and equality holds if and only if $u\in\mathbb E_1.$
\end{lemma}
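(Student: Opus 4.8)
The plan is to prove the inequality by expanding $u$ in the orthogonal basis of Laplacian eigenfunctions furnished by Lemma \ref{lemeigen1}, and to read off both the inequality and its equality case directly from the Fourier coefficients. A variational argument, minimizing the Rayleigh quotient $\int_{\mathbb T}|\nabla u|^2\,d\bm x\big/\int_{\mathbb T}u^2\,d\bm x$ over $\mathring H^1(\mathbb T)$, would also work, but it requires invoking the Rellich compact embedding and analyzing the associated Euler--Lagrange equation, whereas the spectral route is essentially self-contained given what has already been established.

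First I would recall from the proof of Lemma \ref{lemeigen1} that the family $\{e^{2\pi {\rm i}\bm k\cdot\bm x}\mid \bm k\in\Lambda^*\setminus\{(0,0)\}\}$, after normalization by $|\mathbb T|^{-1/2}$, is a complete orthonormal system in $\mathring L^2(\mathbb T)$. Thus any real-valued $u\in\mathring L^2(\mathbb T)$ admits the expansion $u=\sum_{\bm k\neq 0}c_{\bm k}e^{2\pi {\rm i}\bm k\cdot\bm x}$ with $c_{-\bm k}=\overline{c_{\bm k}}$, and Parseval gives $\int_{\mathbb T}u^2\,d\bm x=|\mathbb T|\sum_{\bm k\neq0}|c_{\bm k}|^2$. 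The standard Fourier characterization of the Sobolev space on a flat torus identifies membership $u\in\mathring H^1(\mathbb T)$ with the finiteness of $\sum_{\bm k\neq0}|\bm k|^2|c_{\bm k}|^2$, and since $\nabla e^{2\pi {\rm i}\bm k\cdot\bm x}=2\pi {\rm i}\,\bm k\,e^{2\pi {\rm i}\bm k\cdot\bm x}$, the same orthogonality relations yield $\int_{\mathbb T}|\nabla u|^2\,d\bm x=|\mathbb T|\sum_{\bm k\neq0}4\pi^2|\bm k|^2|c_{\bm k}|^2$.

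With these two identities in hand, the inequality is immediate: since $\lambda_1=4\pi^2\rho(\Lambda^*)^2=4\pi^2\min_{\bm k\neq0}|\bm k|^2$, every nonzero $\bm k\in\Lambda^*$ satisfies $4\pi^2|\bm k|^2\geq\lambda_1$, whence
\[
\int_{\mathbb T}|\nabla u|^2\,d\bm x=|\mathbb T|\sum_{\bm k\neq0}4\pi^2|\bm k|^2|c_{\bm k}|^2\geq|\mathbb T|\,\lambda_1\sum_{\bm k\neq0}|c_{\bm k}|^2=\lambda_1\int_{\mathbb T}u^2\,d\bm x.
\]
For the equality case, I would observe that equality forces $(4\pi^2|\bm k|^2-\lambda_1)|c_{\bm k}|^2=0$ for every $\bm k$, i.e. $c_{\bm k}=0$ whenever $4\pi^2|\bm k|^2>\lambda_1$; equivalently, the expansion of $u$ is supported on $S(\Lambda^*)$, which is precisely the statement $u\in\mathbb E_1$. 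Conversely, any $u\in\mathbb E_1$ has $|\bm k|=\rho(\Lambda^*)$ throughout its spectral support, so the two sides coincide.

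I do not expect a serious obstacle here, as the result is classical. The only points demanding a little care are the Sobolev-space justification---namely that the $\mathring H^1$ norm is equivalent to the weighted $\ell^2$ norm $\sum_{\bm k\neq0}|\bm k|^2|c_{\bm k}|^2$, so that differentiating the expansion term by term is legitimate---and the bookkeeping of the normalization constant $|\mathbb T|$, both of which are standard facts about Fourier series on a flat torus.
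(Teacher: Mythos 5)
Your proof is correct and follows exactly one of the two routes the paper itself indicates (the paper offers no detailed proof, merely noting that the lemma "can be proved via eigenfunction expansion or a standard variational argument"); your spectral argument via Parseval and the bound $4\pi^2|\bm k|^2\ge\lambda_1$ for $\bm k\in\Lambda^*\setminus\{(0,0)\}$, with the equality case read off from the support of the Fourier coefficients, is a faithful and complete execution of the eigenfunction-expansion option.
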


The following energy-enstrophy inequality, which is a direct corollary of the Poincar\'e inequality, will play an important role in the proof of Proposition \ref{propmc} in Section \ref{sec31}.

\begin{lemma}[Energy-enstrophy inequality]\label{lemeei}
For any $f\in \mathring L^2(\mathbb T)$, it holds that
\[\int_{\mathbb T} f\mathsf Gf d\bm x\leq \frac{1}{\lambda_1}\int_{\mathbb T} f^2d\bm x,\]
and equality holds if and only if $f\in\mathbb E_1.$
\end{lemma}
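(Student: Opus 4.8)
The plan is to deduce the inequality directly from the Poincaré inequality (Lemma~\ref{lempoincare}) by passing to the stream function. Set $u=\mathsf G f$, so that $u\in\mathring H^2(\mathbb T)\subset\mathring H^1(\mathbb T)$ by elliptic regularity (recall $f\in\mathring L^2(\mathbb T)$) and $-\Delta u=f$. Integrating by parts on the boundaryless torus yields the two identities
\[\int_{\mathbb T}f\mathsf G f\,d\bm x=\int_{\mathbb T}u(-\Delta u)\,d\bm x=\int_{\mathbb T}|\nabla u|^2\,d\bm x,\qquad \int_{\mathbb T}f^2\,d\bm x=\int_{\mathbb T}(\Delta u)^2\,d\bm x,\]
so the assertion is equivalent to $\int_{\mathbb T}|\nabla u|^2\,d\bm x\le \lambda_1^{-1}\int_{\mathbb T}(\Delta u)^2\,d\bm x$.

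Next I would combine Cauchy--Schwarz with Poincaré. Writing $A=\int_{\mathbb T}|\nabla u|^2\,d\bm x$ and $B=\int_{\mathbb T}f^2\,d\bm x$, Cauchy--Schwarz applied to $\int_{\mathbb T}u(-\Delta u)\,d\bm x$ gives $A\le \|u\|_{L^2(\mathbb T)}\,B^{1/2}$, while Lemma~\ref{lempoincare} gives $\|u\|_{L^2(\mathbb T)}\le \lambda_1^{-1/2}A^{1/2}$. Substituting the latter into the former and cancelling a factor $A^{1/2}$ produces $A\le\lambda_1^{-1}B$, which is precisely the desired inequality. The degenerate case $A=0$ is harmless: it forces $u$ to be constant, hence $u=f=0$ by the mean-zero condition, and the inequality holds trivially with equality.

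The only point that requires a little care is the equality case, which I would read off from the collapse of this two-step chain. If $A=\lambda_1^{-1}B$ with $A>0$, then tracing the equality backward forces equality in Poincaré, so $u\in\mathbb E_1$ by the rigidity clause of Lemma~\ref{lempoincare}; consequently $f=-\Delta u=\lambda_1 u\in\mathbb E_1$. Conversely, if $f\in\mathbb E_1$ then $\mathsf G f=\lambda_1^{-1}f$, and the inequality becomes an identity. I expect no serious obstacle here, but I would verify carefully that $\mathbb E_1$ is preserved under the passage $u\leftrightarrow-\Delta u$ (equivalently that $\mathsf G$ acts on $\mathbb E_1$ as multiplication by $\lambda_1^{-1}$), so that the equality characterization transfers cleanly between the stream function $u$ and the vorticity $f$.

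As an alternative that makes the equality case completely transparent, one could instead expand $f=\sum_{\bm k\in\Lambda^*\setminus\{(0,0)\}}c_{\bm k}e^{2\pi{\rm i}\bm k\cdot\bm x}$ in the eigenbasis of Lemma~\ref{lemeigen1}; then $\int_{\mathbb T}f\mathsf G f\,d\bm x$ and $\int_{\mathbb T}f^2\,d\bm x$ become $\sum_{\bm k}(4\pi^2|\bm k|^2)^{-1}|c_{\bm k}|^2$ and $\sum_{\bm k}|c_{\bm k}|^2$ up to the same volume factor, and the inequality follows termwise from $4\pi^2|\bm k|^2\ge\lambda_1$, with equality exactly when $c_{\bm k}=0$ for every $\bm k\notin S(\Lambda^*)$, i.e.\ when $f\in\mathbb E_1$.
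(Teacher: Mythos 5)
Your proof is correct and follows essentially the same route as the paper: both arguments hinge on the Poincar\'e inequality applied to the stream function $u=\mathsf Gf$ together with an elementary quadratic inequality (you use Cauchy--Schwarz and cancel a factor of $A^{1/2}$, while the paper uses the weighted AM--GM bound $2f\,\mathsf Gf\le\lambda_1^{-1}f^2+\lambda_1(\mathsf Gf)^2$ and subtracts $\int_{\mathbb T}f\mathsf Gf\,d\bm x$), and in both cases the equality characterization is read off from the rigidity clause of Lemma~\ref{lempoincare} plus the fact that $\mathsf G$ acts on $\mathbb E_1$ as $\lambda_1^{-1}$. Your Fourier-expansion alternative is also valid and makes the equality case transparent, but it is not needed; the main argument matches the paper's.
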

\begin{proof}
We estimate as follows:
  \begin{equation}\label{pocar2}
  \begin{split}
  2\int_{\mathbb T} f\mathsf Gf d\bm x&\leq \frac{1}{\lambda_1}\int_{\mathbb T}f^2 d\bm x+ \lambda_1\int_{\mathbb T}|\mathsf Gf|^2 d\bm x\\
  & \leq \frac{1}{ \lambda_1}\int_{\mathbb T}f^2 d\bm x+  \int_{\mathbb T}|\nabla\mathsf Gf|^2 d\bm x\\
   & = \frac{1}{ \lambda_1}\int_{\mathbb T}f^2 d\bm x+  \int_{\mathbb T}f\mathsf Gf d\bm x,
   \end{split}
  \end{equation}
where we have used the inequality of arithmetic and geometric means in the first inequality, and \eqref{pocar1} in the second inequality.
Moreover, the first inequality in \eqref{pocar2} is an equality if and only if $f=\lambda_1 \mathsf Gf$, which is equivalent to $f\in\mathbb E_1;$ and the second one is an equality if and only if $\mathsf Gf\in\mathbb E_1$, which is also equivalent to $f\in\mathbb E_1.$  This completes the proof.

\end{proof}

\begin{remark}
 Lemma \ref{lemeei} provides a variational characterization for $\mathbb E_1$ in terms of the kinetic energy and the enstrophy (both conserved quantities of the Euler equation), i.e., $\mathbb E_1$ is exactly the set of maximizers of the following maximization problem:
 \[\sup_{f\in \mathring L^2(\mathbb T),\,f\not\equiv 0}\frac{\int_{\mathbb T}f\mathsf G f d\bm x}{\int_{\mathbb T}f^2d\bm x}.\]
  Such a variational characterization is the very basis on which we can analyze the stability of the first eigenstates.
\end{remark}

\subsection{A Burton-type stability criterion}\label{sec23}
We now state a stability criterion for two-dimensional ideal fluids related to the maximization of the kinetic energy, sometimes together with certain linear conserved quantities associated with the symmetry of the domain, relative to a fixed rearrangement class. This idea originates from Burton’s work \cite{BAR}, with later developments discussed in \cite{CWZ, WMA, Wdisk, Wchanl, WZCV}.

Let $\mathcal R$ be the rearrangement class of some function in
$\mathring L^p(\mathbb T)$. Consider the following maximization problem:
\begin{equation}\label{mmpm}
M=\sup_{f\in \mathcal R}E(f).
\end{equation}
Denote by $\mathcal M$ the set  of maximizers for \eqref{mmpm}, i.e.,
\[\mathcal M=\left\{f\in \mathcal R\mid E(f)=M\right\}.\]

\begin{proposition}\label{propbsc}
The set $\mathcal M$ is  nonempty and compact  in $\mathring L^p(\mathbb T)$, and is stable  in the following sense: for any $\varepsilon>0,$ there exists some $\delta>0, $ such that for any $L^p$-admissible map $\zeta(t)$ in the sense of Definition \ref{defam}, if
\[\min_{f\in\mathcal M}\|\zeta(0)-f\|_{L^p(\mathbb T)}<\delta\]
then
\[ \min_{f\in\mathcal M}\|\zeta(t)-f\|_{L^p(\mathbb T)}<\varepsilon\quad\forall\,t\in\mathbb R.\]
 \end{proposition}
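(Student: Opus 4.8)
The plan is to follow Burton's rearrangement theory \cite{BAR}, splitting the argument into three parts: (i) existence of maximizers together with the inclusion $\mathcal M\subset\mathcal R$; (ii) compactness of $\mathcal M$ in $\mathring L^p(\mathbb T)$; and (iii) the stability statement, to be obtained by a compactness-and-contradiction argument. The analytic engine behind all three parts is the \emph{weak sequential continuity} of $E$ on $\mathring L^p(\mathbb T)$. Writing $E(f)=\tfrac12\int_{\mathbb T}f\,\mathsf Gf\,d\bm x$, I would note that $\mathsf G$ maps $\mathring L^p(\mathbb T)$ into $\mathring W^{2,p}(\mathbb T)$, and that since $\dim\mathbb T=2$ and $p>1$ the embedding $W^{2,p}(\mathbb T)\hookrightarrow C^0(\mathbb T)\hookrightarrow L^{p'}(\mathbb T)$ (with $1/p+1/p'=1$) is compact; hence $\mathsf G:\mathring L^p(\mathbb T)\to\mathring L^{p'}(\mathbb T)$ is a compact operator. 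Consequently, if $f_n\rightharpoonup f$ weakly in $\mathring L^p$, then $\mathsf Gf_n\to\mathsf Gf$ strongly in $L^{p'}$, and pairing a weakly convergent sequence against a strongly convergent one gives $E(f_n)\to E(f)$.

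For part (i), I would invoke the two structural facts of rearrangement theory: the weak closure $\overline{\mathcal R}^{\,w}$ of $\mathcal R$ in $\mathring L^p$ is convex and weakly compact, and its set of extreme points is exactly $\mathcal R$. Because $E$ is weakly continuous it attains its maximum over $\overline{\mathcal R}^{\,w}$, and because $E$ is convex (indeed strictly convex, as $\mathsf G$ is a positive operator and $\int f\,\mathsf Gf>0$ for $f\not\equiv0$) this maximum can only be attained at an extreme point. Burton's characterization of energy maximizers over rearrangement classes then forces every maximizer to lie in $\mathcal R$ itself, so $\mathcal M\neq\emptyset$, $\mathcal M\subset\mathcal R$, and $M=\max_{\overline{\mathcal R}^{\,w}}E$.

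For part (ii), take any sequence $(f_n)\subset\mathcal M$. Since $\mathcal M\subset\mathcal R=\mathcal R_{f_0}$, all $f_n$ share the common norm $\|f_n\|_{L^p}=\|f_0\|_{L^p}$, so after passing to a subsequence $f_n\rightharpoonup f$ weakly with $E(f)=M$; by part (i), $f\in\mathcal M\subset\mathcal R$, whence $\|f\|_{L^p}=\|f_0\|_{L^p}=\lim_n\|f_n\|_{L^p}$. Weak convergence together with convergence of the norms in the uniformly convex space $\mathring L^p(\mathbb T)$ (recall $1<p<\infty$) upgrades to strong convergence $f_n\to f$ in $L^p$. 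Thus $\mathcal M$ is sequentially compact, hence compact, in $\mathring L^p(\mathbb T)$.

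Part (iii) is the crux, and I would argue by contradiction. If stability fails there exist $\varepsilon_0>0$, $L^p$-admissible maps $\zeta_n$, and times $t_n$ with $\min_{f\in\mathcal M}\|\zeta_n(0)-f\|_{L^p}\to0$ but $\min_{f\in\mathcal M}\|\zeta_n(t_n)-f\|_{L^p}\ge\varepsilon_0$. Using compactness of $\mathcal M$, I would extract $\zeta_n(0)\to w_0\in\mathcal M$ strongly, so that by energy conservation and continuity of $E$ one gets $E(\zeta_n(t_n))=E(\zeta_n(0))\to E(w_0)=M$. Setting $g_n:=\zeta_n(t_n)\in\mathcal R_{\zeta_n(0)}$, the sequence is bounded in $L^p$, so $g_n\rightharpoonup g_*$ weakly along a subsequence. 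The main technical point, and the principal obstacle, is that the $g_n$ live in the \emph{varying} classes $\mathcal R_{\zeta_n(0)}$ rather than in the fixed target class $\mathcal R=\mathcal R_{w_0}$; one must therefore reconcile the conserved constraint $\zeta_n(t)\in\mathcal R_{\zeta_n(0)}$ with $\mathcal R$. I would do this by passing the Hardy--Littlewood majorization $g_n\prec\zeta_n(0)$ to the weak limit, using that the decreasing-rearrangement map is continuous so that $(\zeta_n(0))^*\to w_0^*$ in $L^p$; closedness of the domination order under weak limits then yields $g_*\prec w_0$, i.e. $g_*\in\overline{\mathcal R}^{\,w}$. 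Weak continuity gives $E(g_*)=M$, so $g_*\in\mathcal M\subset\mathcal R$ by part (i); in particular $\|g_*\|_{L^p}=\|w_0\|_{L^p}=\lim_n\|g_n\|_{L^p}$, and uniform convexity once more upgrades $g_n\rightharpoonup g_*$ to $g_n\to g_*$ strongly. This forces $\min_{f\in\mathcal M}\|g_n-f\|_{L^p}\le\|g_n-g_*\|_{L^p}\to0$, contradicting the lower bound $\varepsilon_0$. Throughout, the two weak-to-strong upgrades rest essentially on the uniform convexity of $\mathring L^p(\mathbb T)$ for $1<p<\infty$, while the delicate step is the transfer of the majorization structure across the drifting rearrangement classes $\mathcal R_{\zeta_n(0)}$.
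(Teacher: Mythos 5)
Your argument is correct and is precisely the Burton-type scheme that the paper invokes by deferring to \cite[Section 5]{Wdisk}: weak continuity of $E$ via compactness of $\mathsf G$, strict convexity forcing maximizers of $E$ over the weak closure $\overline{\mathcal R}^{\,w}$ onto its extreme points (which form $\mathcal R$), the Radon--Riesz upgrade from weak to strong convergence, and a contradiction argument for stability in which the majorization constraint is passed to the weak limit across the varying classes $\mathcal R_{\zeta_n(0)}$. Since the paper gives no details beyond this citation, your write-up essentially supplies the omitted proof along the same lines.
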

 \begin{proof}
  It follows from a similar argument as  in \cite[Section 5]{Wdisk}. For the reader's convenience, we provide a detailed proof below.
   
We first prove that $\mathcal M$ is nonempty and compact by establishing the compactness of maximizing sequences for \eqref{mmpm}. Let $\{f_n\}\subset\mathcal R$ satisfy $E(f_n)\to M$. Since $\mathcal R$ is bounded in $L^p(\mathbb T)$ and $1<p<\infty$, after passing to a subsequence we have
\[
f_n\rightharpoonup f\quad\hbox{weakly in }L^p(\mathbb T)
\]
for some $f\in \mathring L^p(\mathbb T)$. Let $p'=p/(p-1)$. Since the map $\mathsf G:\mathring L^p(\mathbb T)\to \mathring L^{p'}(\mathbb T)$ is compact (see Appendix \ref{apdx1}), $E$ is weakly sequentially continuous in $\mathring L^p(\mathbb T)$. Hence $E(f)=M$.
Applying \cite[Theorem 4]{BMA}, there exists $g\in\mathcal R$ such that
\[
\int_{\mathbb T}g\mathsf Gf\,d\bm x
=\sup_{v\in\mathcal R}\int_{\mathbb T}v\mathsf Gf\,d\bm x.
\]
It follows that
\[
\int_{\mathbb T}g\mathsf Gf\,d\bm x\geq \lim_{n\to\infty}\int_{\mathbb T}f_n\mathsf Gf\,d\bm x
= \int_{\mathbb T}f\mathsf Gf\,d\bm x=2M.
\]
Using the symmetry and positive definiteness of $\mathsf G$, we obtain
\[
0\leq E(g-f)
=E(g)+E(f)-\int_{\mathbb T}g\mathsf Gf\,d\bm x
\leq M+M-2M=0.
\]
Thus $g=f$, and hence $f\in\mathcal M$. Moreover,
$\|f_n\|_{L^p(\mathbb T)}=\|f\|_{L^p(\mathbb T)}$ since $f_n,f\in\mathcal R$. Then the uniform convexity of $L^p(\mathbb T)$, together with the weak convergence $f_n\rightharpoonup f$, yields 
\[
f_n\to f
\quad\hbox{strongly in }L^p(\mathbb T).
\]
Consequently, every maximizing sequence has a strongly convergent subsequence with limit in $\mathcal M$. This proves that $\mathcal M$ is nonempty and compact.

It remains to prove stability. Suppose, to the contrary, that the conclusion fails. Then there exist $\varepsilon_0>0$, a sequence of $L^p$-admissible maps $\{\zeta_n\}$, and a sequence of times $\{t_n\}\subset\mathbb R$ such that
\begin{equation}\label{cont1}
\min_{f\in\mathcal M}\|\zeta_n(0)-f\|_{L^p(\mathbb T)}\to 0,
\end{equation}
and
\begin{equation}\label{cont2}
\min_{f\in\mathcal M}\|\zeta_n(t_n)-f\|_{L^p(\mathbb T)}\geq\varepsilon_0.
\end{equation}
In view of \eqref{cont1} and the compactness of $\mathcal M$, after taking a subsequence we have that $\zeta_n(0)\to f$ in $L^p(\mathbb T)$ for some $f\in\mathcal M$. Since $\zeta_n(t_n)\in\mathcal R_{\zeta_n(0)}$ and $f\in\mathcal R$, we can apply \cite[Lemma 2.3]{BACT} to find some $\eta_n\in\mathcal R$ such that  
\[
\|\zeta_n(t_n)-\eta_n\|_{L^p(\mathbb T)}
\leq \|\zeta_n(0)-f\|_{L^p(\mathbb T)}\to 0.
\]
It is clear that the sequence $\{\eta_n\}$ is bounded in $L^p(\mathbb T)$; by the preceding convergence, so is $\{\zeta_n(t_n)\}$. On the other hand, the boundedness of $\mathsf G:\mathring L^p(\mathbb T)\to \mathring L^{p'}(\mathbb T)$ implies that
\[
|E(\zeta_n(t_n))-E(\eta_n)|\leq C\left(\|\zeta_n(t_n)\|_{L^p(\mathbb T)}+\|\eta_n\|_{L^p(\mathbb T)}\right)\|\zeta_n(t_n)-\eta_n\|_{L^p(\mathbb T)},
\]
where $C>0$ depends only on $p$ and $\Lambda$.
Consequently, $E(\eta_n)-E(\zeta_n(t_n))\to0$. Energy conservation and the continuity of $E$ at $f$ now give
\[
E(\eta_n)-M
=E(\eta_n)-E(\zeta_n(t_n))+E(\zeta_n(0))-E(f)\to0.
\]
Thus $\{\eta_n\}$ is a maximizing sequence for \eqref{mmpm}; by the compactness just proved, a subsequence converges strongly to an element of $\mathcal M$. The same is then true of $\zeta_n(t_n)$, contradicting \eqref{cont2}. This completes the proof.
 \end{proof}

\begin{remark} 
The preceding proof uses reflexivity and uniform convexity of $L^p(\mathbb T)$, as well
as the compact mapping
$\mathsf G:\mathring L^p(\mathbb T)\to\mathring L^{p'}(\mathbb T)$. These ingredients do not give
the required strong compactness at $p=1$ or $p=\infty$. The restriction
$1<p<\infty$ concerns this method and does not assert instability at either endpoint.
\end{remark}

\section{Proof}\label{sec3}

Throughout this section, let $\bar\omega\in\mathbb E_1$ be fixed. Recall that $\mathcal O_{\bar\omega}$ and $\mathcal C_{\bar\omega}$ are defined in \eqref{tobt} and \eqref{cobt}, respectively.

\subsection{Variational characterization for $\mathcal C_{\bar\omega}$}\label{sec31}
 The aim of this subsection is to prove the following proposition.

 \begin{proposition}\label{propmc}
 Consider the following maximization problem:
\begin{equation}\label{mmpm2}
M_{\bar\omega}=\sup_{f\in \mathcal R_{\bar\omega}}E(f).
\end{equation}
Denote by $\mathcal M_{\bar\omega}$ the set  of maximizers for \eqref{mmpm2}. Then
 \[
 \mathcal M_{\bar\omega}=\mathcal C_{\bar\omega}.
 \]
  \end{proposition}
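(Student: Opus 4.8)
The plan is to exploit the single fact that makes this proposition almost immediate: the $L^2$-norm is constant on a rearrangement class, so maximizing the energy $E$ over $\mathcal{R}_{\bar\omega}$ is the same as maximizing the energy-enstrophy ratio, for which Lemma \ref{lemeei} already supplies both the sharp bound and the equality cases. Concretely, since every $f \in \mathcal{R}_{\bar\omega}$ is equimeasurable with $\bar\omega$, it has the same distribution function and hence the same $L^2$-norm; in particular $\mathcal{R}_{\bar\omega} \subset \mathring{L}^\infty(\mathbb{T}) \subset \mathring{L}^2(\mathbb{T})$ because $\bar\omega \in \mathbb{E}_1$ is bounded, so the energy-enstrophy inequality applies to every element of $\mathcal{R}_{\bar\omega}$.

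First I would record that for all $f \in \mathcal{R}_{\bar\omega}$,
\[
E(f) = \frac{1}{2}\int_{\mathbb T} f \mathsf{G} f \, d\bm x \le \frac{1}{2\lambda_1}\int_{\mathbb T} f^2 \, d\bm x = \frac{1}{2\lambda_1}\int_{\mathbb T} \bar\omega^2 \, d\bm x,
\]
where the inequality is Lemma \ref{lemeei} and the final equality is rearrangement-invariance of the $L^2$-norm. This exhibits a uniform upper bound for $E$ on $\mathcal{R}_{\bar\omega}$ that is independent of $f$. Next I would observe that this bound is attained: since $\bar\omega \in \mathbb{E}_1$, the equality case of Lemma \ref{lemeei} gives $E(\bar\omega) = \frac{1}{2\lambda_1}\int_{\mathbb T} \bar\omega^2 \, d\bm x$, and because $\bar\omega \in \mathcal{R}_{\bar\omega}$ we conclude $M_{\bar\omega} = \frac{1}{2\lambda_1}\int_{\mathbb T} \bar\omega^2 \, d\bm x$. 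In particular $\mathcal{C}_{\bar\omega}$ is nonempty, as it contains $\bar\omega$.

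Finally I would identify the maximizer set. For $f \in \mathcal{R}_{\bar\omega}$, the chain above shows $E(f) = M_{\bar\omega}$ holds if and only if equality holds in the energy-enstrophy inequality, which by Lemma \ref{lemeei} is equivalent to $f \in \mathbb{E}_1$. Hence
\[
\mathcal{M}_{\bar\omega} = \left\{ f \in \mathcal{R}_{\bar\omega} \mid f \in \mathbb{E}_1 \right\} = \mathcal{R}_{\bar\omega} \cap \mathbb{E}_1 = \mathcal{C}_{\bar\omega},
\]
which is exactly the desired identity. The argument is a direct double inclusion packaged as an equivalence, so no separate subset verifications are needed.

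I do not anticipate a genuine obstacle here; the content is entirely front-loaded into Lemma \ref{lemeei}, and the only points requiring care are bookkeeping: confirming that the rearrangement class sits inside $\mathring{L}^2(\mathbb{T})$ so that the inequality is applicable, and verifying that the equality characterization in Lemma \ref{lemeei} is an exact biconditional (equality $\iff f \in \mathbb{E}_1$) so that the maximizer set is pinned down precisely rather than merely bounded. The substantive difficulties of the paper lie not in this variational characterization but in the subsequent step of counting translational orbits inside $\mathcal{C}_{\bar\omega}$.
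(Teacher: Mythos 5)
Your proof is correct and rests on the same key ingredient as the paper's: the energy--enstrophy inequality (Lemma \ref{lemeei}) combined with the fact that the $L^2$-norm is constant on $\mathcal R_{\bar\omega}$. The only difference is cosmetic --- you apply the inequality directly to $f$, whereas the paper writes $f=\bar\omega+\varrho$ and applies it to the perturbation $\varrho$; your version is a mild streamlining that reaches the same equality characterization $\mathcal M_{\bar\omega}=\mathcal R_{\bar\omega}\cap\mathbb E_1$.
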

  \begin{proof}
 Since $\mathcal C_{\bar\omega}=\mathcal R_{\bar\omega}\cap \mathbb E_1,$ it suffices to prove the following claim:
   \begin{equation}\label{ccle0}
   E(\bar\omega)\geq E(f)\mbox{ for any }f\in\mathcal R_{\bar\omega},\mbox{ and equality holds if and only if }f\in \mathbb E_1.
   \end{equation}
To this end,  fix an arbitrary $f\in\mathcal R_{\bar\omega}$. For convenience, write $f= \bar\omega+\varrho$. Then  $\|\bar\omega+\varrho\|_{L^2(\mathbb T)}=\|\bar\omega\|_{L^2(\mathbb T)}$, which implies
 \begin{equation}\label{ccle2}
 \int_{\mathbb T}\varrho\bar\omega d\bm x=-\frac{1}{2}\int_{\mathbb T}\varrho^2 d\bm x.
 \end{equation}
Using \eqref{ccle2}, we can compute as follows:
    \begin{equation}\label{ccle3}
    \begin{split}
    E(\bar\omega)-E(\varrho+\bar\omega)&=-\frac{1}{2}\int_{\mathbb T}\varrho\mathsf G\varrho d\bm x-\int_{\mathbb T}\varrho\mathsf G\bar\omega d\bm x\\
    &=-\frac{1}{2}\int_{\mathbb T}\varrho\mathsf G\varrho d\bm x-\frac{1}{\lambda_1}\int_{\mathbb T}\varrho \bar\omega d\bm x\\
    &=-\frac{1}{2}\int_{\mathbb T}\varrho\mathsf G\varrho d\bm x+\frac{1}{2\lambda_1}\int_{\mathbb T}\varrho^2d\bm x.
    \end{split}
    \end{equation}
Note that we used $\mathsf G\bar\omega=\lambda_1^{-1}\bar\omega$ (since $\bar\omega\in\mathbb E_1$) in the second equality of \eqref{ccle3}. Then, by applying the energy–enstrophy inequality (see Lemma \ref{lemeei}) to $\varrho$, we deduce that
   \[ E(\bar\omega)-E(\varrho+\bar\omega)\geq 0,\]
with equality if and only if $\varrho\in\mathbb E_1$, which is equivalent to $f\in\mathbb E_1$. The proof is complete.
  \end{proof}

 \subsection{Finiteness of translational orbits within $\mathcal C_{\bar\omega}$}\label{sec32}


In this section, we show that the number of translational orbits within $\mathcal C_{\bar\omega}$ is finite. For clarity, we divide the discussion into three cases according to the dimension of $\mathbb E_1.$

\subsubsection{Case $\dim(\mathbb E_1)=2$}\label{sec321}
 \begin{proposition}\label{prop2d}
If $\dim(\mathbb E_1)=2$, then $\mathcal C_{\bar\omega} =\mathcal O_{\bar\omega}$ (i.e., there is only one translational orbit in $\mathcal C_{\bar\omega}$).
 \end{proposition}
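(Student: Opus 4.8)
The plan is to exploit the fact that, when $\dim(\mathbb E_1)=2$, every element of $\mathbb E_1$ is a \emph{single} sinusoid, so its rearrangement class is determined by one scalar (the amplitude), and a change of amplitude--phase is nothing but a translation. First I would invoke Lemma~\ref{lemdime1}(i) to fix a nonzero vector $\bm k$ with $S(\Lambda^*)=\{\pm\bm k\}$ and $\mathbb E_1=\mathrm{span}\{\cos(2\pi\bm k\cdot\bm x),\sin(2\pi\bm k\cdot\bm x)\}$, so that
\[
\bar\omega = A\cos(2\pi\bm k\cdot\bm x+\phi),\qquad A\ge 0,\ \phi\in\mathbb R.
\]
The inclusion $\mathcal O_{\bar\omega}\subseteq\mathcal C_{\bar\omega}$ is immediate: for any $\bm p\in\mathbb R^2$ the translate $\bar\omega(\cdot-\bm p)$ lies in $\mathbb E_1$ and is equimeasurable with $\bar\omega$, since translation preserves the Lebesgue measure of every superlevel set. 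Computing it explicitly,
\[
\bar\omega(\bm x-\bm p)=A\cos\bigl(2\pi\bm k\cdot\bm x+\phi-2\pi\bm k\cdot\bm p\bigr),
\]
and because $\bm k\neq(0,0)$ the quantity $2\pi\bm k\cdot\bm p$ sweeps out all of $\mathbb R$ as $\bm p$ ranges over $\mathbb R^2$; hence $\mathcal O_{\bar\omega}=\{A\cos(2\pi\bm k\cdot\bm x+\psi)\mid\psi\in\mathbb R\}$.

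For the reverse inclusion I would take an arbitrary $f\in\mathcal C_{\bar\omega}=\mathcal R_{\bar\omega}\cap\mathbb E_1$ and write $f=A'\cos(2\pi\bm k\cdot\bm x+\psi')$ with $A'\ge 0$. The key observation is that equimeasurability forces equal $L^2$-norms, and in this one-frequency setting the $L^2$-norm pins down the amplitude. Indeed, using $\cos^2=\tfrac12(1+\cos 2(\cdot))$ together with the fact that $2\bm k\in\Lambda^*\setminus\{(0,0)\}$, so that $\cos(2\pi(2\bm k)\cdot\bm x+2\psi')$ is a mean-zero eigenfunction, one obtains
\[
\|f\|_{L^2(\mathbb T)}^2=\int_{\mathbb T}A'^2\cos^2(2\pi\bm k\cdot\bm x+\psi')\,d\bm x=\frac{A'^2}{2}\,|\mathbb T|.
\]
Since $\|f\|_{L^2(\mathbb T)}=\|\bar\omega\|_{L^2(\mathbb T)}$ and $t\mapsto t^2$ is strictly increasing on $[0,\infty)$, this yields $A'=A$.

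Finally, with $A'=A$ fixed, $f=A\cos(2\pi\bm k\cdot\bm x+\psi')$ is realized as a translate of $\bar\omega$ upon choosing any $\bm p\in\mathbb R^2$ with $2\pi\bm k\cdot\bm p=\phi-\psi'$, which exists because $\bm k\neq(0,0)$; hence $f\in\mathcal O_{\bar\omega}$. Combining the two inclusions gives $\mathcal C_{\bar\omega}=\mathcal O_{\bar\omega}$, the case $A=0$ (i.e.\ $\bar\omega\equiv 0$) being trivial. I do not expect any serious obstacle here: the whole argument collapses because a two-dimensional $\mathbb E_1$ carries a single spatial frequency, so amplitude is the only equimeasurability invariant and phase is exactly translation. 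The genuine difficulty---trading amplitudes and phases across several frequencies while preserving the distribution function---only arises once $\dim(\mathbb E_1)\in\{4,6\}$, which is precisely where the later subsections concentrate their effort.
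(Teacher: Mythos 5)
Your proof is correct and follows essentially the same route as the paper: both reduce the problem to showing that equimeasurability pins down the amplitude $A$ and that a phase shift of a single sinusoid is exactly a translation (the paper cites Lemma~\ref{lemap2}(i) for the latter, which you prove inline). The only cosmetic difference is that you extract the amplitude from the $L^2$-norm while the paper uses the $L^\infty$-norm; both are equimeasurability invariants and work equally well here.
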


 \begin{proof}
 Since the inclusion $\mathcal O_{\bar\omega} \subset \mathcal C_{\bar\omega}$ is obvious, it suffices to prove
 \begin{equation}\label{csso1}
 \mathcal C_{\bar\omega} \subset \mathcal O_{\bar\omega}.
 \end{equation}
To this end, fix $w\in\mathcal C_{\bar\omega}$. Let $\bm k$ be as in Lemma \ref{lemdime1}(i). Then there exist $A,B\geq 0$ and $\alpha,\beta\in\mathbb R$ such that
\[\bar\omega= A\cos(2\pi\bm k\cdot\bm x+\alpha), \quad w= B\cos(2\pi\bm k\cdot\bm x+\beta).\]
Since $w\in\mathcal C_{\bar\omega}\subset\mathcal R_{\bar\omega}$, it holds that
\[\|\bar\omega\|_{L^\infty(\mathbb T)}=\|w\|_{L^\infty(\mathbb T)}.\]
On the other hand, it is obvious that
\[ \|\bar\omega\|_{L^\infty(\mathbb T)}=A,\quad \|w\|_{L^\infty(\mathbb T)}=B.\]
Hence  $A= B,$ which implies that $w\in\mathcal O_{\bar\omega}$ by Lemma \ref{lemap2}(i). The proof is complete.
 \end{proof}

\subsubsection{Case $\dim(\mathbb E_1)=4$}\label{sec322}
 \begin{proposition}\label{prop4d}
If $\dim(\mathbb E_1)=4$, then there are at most 2 translational orbits within $\mathcal C_{\bar\omega}$.
 \end{proposition}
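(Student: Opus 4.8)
The plan is to first use the conserved Casimirs to pin down the amplitudes of any element of $\mathcal C_{\bar\omega}$, and then exploit the linear independence of $\bm k_1,\bm k_2$ to show that, once the amplitudes are fixed, the phases can be adjusted freely by translation. By Lemma \ref{lemdime1}(ii), fix linearly independent $\bm k_1,\bm k_2\in S(\Lambda^*)$ with $|\bm k_1|=|\bm k_2|=\rho(\Lambda^*)$, and write
\[
\bar\omega=A\cos(2\pi\bm k_1\cdot\bm x+\alpha)+B\cos(2\pi\bm k_2\cdot\bm x+\beta),\qquad A,B\ge 0.
\]
Absorbing signs into phases, any $w\in\mathcal C_{\bar\omega}\subset\mathbb E_1$ can be written as $w=a\cos(2\pi\bm k_1\cdot\bm x+\gamma)+b\cos(2\pi\bm k_2\cdot\bm x+\delta)$ with $a,b\ge 0$.

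\emph{Step 1: the amplitudes are determined up to a swap.} Since $w\in\mathcal R_{\bar\omega}$, the functions $w$ and $\bar\omega$ are equimeasurable, so $\int_{\mathbb T}F(w)\,d\bm x=\int_{\mathbb T}F(\bar\omega)\,d\bm x$ for every Borel $F$; I will use $F(s)=s^2$ and $F(s)=s^4$. Because $\bm k_1,\bm k_2$ are linearly independent over $\mathbb R$, every integer combination $m\bm k_1+n\bm k_2$ with $(m,n)\neq(0,0)$ is a nonzero element of $\Lambda^*$, so the corresponding exponential integrates to zero over $\mathbb T$. The only surviving contributions are the resonance-free ones: here $|\bm k_1|=|\bm k_2|$ rules out $3\bm k_1=\pm\bm k_2$ and linear independence rules out $2\bm k_1=\pm 2\bm k_2$, so a direct expansion gives
\[
\int_{\mathbb T} w^2\,d\bm x=\frac{|\mathbb T|}{2}\bigl(a^2+b^2\bigr),\qquad
\int_{\mathbb T} w^4\,d\bm x=\frac{3|\mathbb T|}{8}\bigl(a^4+4a^2b^2+b^4\bigr).
\]
Equating these with the corresponding quantities for $\bar\omega$ yields $a^2+b^2=A^2+B^2$ and, after subtracting, $a^2b^2=A^2B^2$. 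Hence $\{a^2,b^2\}=\{A^2,B^2\}$, and since all amplitudes are nonnegative, $\{a,b\}=\{A,B\}$ as multisets.

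\emph{Step 2: each amplitude assignment is a single translational orbit.} Thus every $w\in\mathcal C_{\bar\omega}$ satisfies either $(a,b)=(A,B)$ or $(a,b)=(B,A)$. In the first case, since the linear map $\bm p\mapsto(2\pi\bm k_1\cdot\bm p,\,2\pi\bm k_2\cdot\bm p)$ is an isomorphism of $\mathbb R^2$ (again by linear independence), there exists $\bm p\in\mathbb R^2$ with $2\pi\bm k_1\cdot\bm p=\alpha-\gamma$ and $2\pi\bm k_2\cdot\bm p=\beta-\delta$, so that $w=\bar\omega(\cdot-\bm p)\in\mathcal O_{\bar\omega}$. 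In the second case, the identical argument applied to the reference state $w_0:=B\cos(2\pi\bm k_1\cdot\bm x+\alpha)+A\cos(2\pi\bm k_2\cdot\bm x+\beta)$ shows $w\in\mathcal O_{w_0}$.

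Combining the two steps, $\mathcal C_{\bar\omega}\subset\mathcal O_{\bar\omega}\cup\mathcal O_{w_0}$, so there are at most two translational orbits within $\mathcal C_{\bar\omega}$ (and exactly one when $A=B$, in which case the two assignments coincide). I expect the main obstacle to be Step 1, namely extracting $\{a,b\}=\{A,B\}$ from the conserved quantities; this is clean here precisely because the two-term structure, together with the equal-length and linear-independence conditions on $\bm k_1,\bm k_2$, produces no trigonometric resonances—this is exactly the feature that fails in the $6$D hexagonal case, where the relation $\bm k_3=\bm k_1+\bm k_2$ introduces a cubic resonance and forces the more delicate phase analysis. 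Step 2 is then a routine consequence of the invertibility of the phase map.
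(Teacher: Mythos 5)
Your proof is correct and follows essentially the same two-step strategy as the paper: rearrangement invariants pin the amplitude pair down to at most two possibilities, and the invertibility of the phase map $\bm p\mapsto(2\pi\bm k_1\cdot\bm p,\,2\pi\bm k_2\cdot\bm p)$ shows each amplitude pair yields a single translational orbit (this is exactly Lemma \ref{lemap2}(ii), which you reprove inline). The only difference is cosmetic: the paper uses the $L^\infty$- and $L^2$-norms to obtain a line--ellipse system with at most two solutions, whereas you use the $L^2$- and $L^4$-moments, which yields the slightly more explicit conclusion $\{a,b\}=\{A,B\}$.
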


  \begin{proof}
 Fix an arbitrary translational orbit $\mathcal O_{w}$ with $w\in\mathcal C_{\bar\omega}$. Let $\bm k_1$ and $\bm k_2$ be as in Lemma \ref{lemdime1}(ii). Then there exist $A_1, A_2\geq 0$ and $\alpha_1, \alpha_2\in\mathbb R$ such that
  \[w= \sum_{i=1}^2A_i \cos(2\pi\bm k_i\cdot\bm x+\alpha_i).\]
 Since $w\in \mathcal C_{\bar\omega}\subset \mathcal R_{\bar\omega},$ it holds that  \[\|w\|_{L^\infty(\mathbb T)}=\|\bar\omega\|_{L^\infty(\mathbb T)},\quad \|w\|_{L^2(\mathbb T)}=\|\bar\omega\|_{L^2(\mathbb T)},\]
  which yields
 \begin{equation}\label{a1a2p1}
 A_1+A_2=a_1,\quad
  p_1A_1^2+p_2A_2^2=a_2,
  \end{equation}
  where
    \[p_1=\int_{\mathbb T}\cos^2(2\pi\bm k_1\cdot\bm x)d\bm x,\quad p_2=\int_{\mathbb T}\cos^2(2\pi\bm k_2\cdot\bm x)d\bm x,\]
  and $a_1, a_2\in\mathbb R$ depend only on $\bar\omega$ and $\Lambda$. Note that, in deriving the first relation in \eqref{a1a2p1}, we used the fact that $\bm k_1$ and $\bm k_2$ are linearly independent.
The desired result is then a straightforward consequence of the following two facts:
\begin{itemize}
  \item [(1)]  There are at most two pairs $(A_1,A_2)$ satisfying \eqref{a1a2p1}.
  \item  [(2)] The translational orbit $\mathcal O_{w}$ is uniquely determined by the pair $(A_1,A_2)$; see  Lemma \ref{lemap2}(ii) in Appendix \ref{apdx2}.

\end{itemize}

  \end{proof}

\subsubsection{Case $\dim(\mathbb E_1)=6$}\label{sec323}

 \begin{proposition}\label{prop6d}
If $\dim(\mathbb E_1)=6$, then there are at most 12 translational orbits within $\mathcal C_{\bar\omega}$.
 \end{proposition}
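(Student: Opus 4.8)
The plan is to follow the template of the 4D case (Proposition \ref{prop4d}), but now the phases enter in an essential way. By Lemma \ref{lemdime1}(iii), any $w$ whose orbit $\mathcal O_w$ lies in $\mathcal C_{\bar\omega}$ can be written as $w=\sum_{i=1}^3 A_i\cos(2\pi\bm k_i\cdot\bm x+\alpha_i)$ with $A_i\ge 0$ and $\bm k_3=\bm k_1+\bm k_2$. Translating $w$ by $\bm p$ replaces $\alpha_i$ by $\alpha_i-2\pi\bm k_i\cdot\bm p$; since $\bm k_1,\bm k_2$ are independent, the pair $(\bm k_1\cdot\bm p,\bm k_2\cdot\bm p)$ ranges over all of $\mathbb R^2$, while the relation $\bm k_3=\bm k_1+\bm k_2$ forces the single combination $\Phi:=\alpha_1+\alpha_2-\alpha_3 \pmod{2\pi}$ to be translation-invariant. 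Thus, as recorded in Lemma \ref{lemap2}(iii), the orbit $\mathcal O_w$ is uniquely determined by the ordered triple $(A_1,A_2,A_3)$ together with $\Phi$ when $A_1A_2A_3\neq 0$, and by the amplitudes alone when $A_1A_2A_3=0$. It therefore suffices to show that equimeasurability with $\bar\omega$ leaves at most $12$ admissible values of $(A_1,A_2,A_3,\Phi)$.

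The idea is to read off these parameters from the moments $\int_{\mathbb T}w^n\,d\bm x$, all of which are fixed by $w\in\mathcal R_{\bar\omega}$. A preliminary observation organizes the computation: the reflection $\bm x\mapsto-\bm x$ is measure preserving on $\mathbb T$ and sends the phase invariant $\Phi$ to $-\Phi$, so every such moment is invariant under $\Phi\mapsto-\Phi$, hence is a polynomial in $A_1,A_2,A_3$ and $\cos\Phi$ alone. This already explains the factor $2$ in the count: equimeasurability can at best determine $\cos\Phi$, leaving the two values $\pm\Phi$.

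I would then compute the low-order moments, using that the only additive relation among the vectors $\pm\bm k_1,\pm\bm k_2,\pm\bm k_3$ is $\bm k_1+\bm k_2-\bm k_3=0$, so that cross terms integrate to zero unless this triangle (or its double) is used. This shows that $\int_{\mathbb T}w^2$ fixes $e_1:=A_1^2+A_2^2+A_3^2$, that $\int_{\mathbb T}w^4$ fixes $e_2:=\sum_{i<j}A_i^2A_j^2$, that $\int_{\mathbb T}w^3$ fixes $A_1A_2A_3\cos\Phi$, and that $\int_{\mathbb T}w^6$ equals a symmetric polynomial in $A_1^2,A_2^2,A_3^2$ plus a resonant term proportional to $A_1^2A_2^2A_3^2\cos2\Phi$. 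Writing $\cos2\Phi=2\cos^2\Phi-1$ and inserting the value of $A_1^2A_2^2A_3^2\cos^2\Phi$ furnished by the cube moment, the $\cos2\Phi$ contribution collapses to a known constant plus a multiple of $e_3:=A_1^2A_2^2A_3^2$; checking that the resulting coefficient of $e_3$ does not vanish lets me solve for $e_3$. At this point $e_1,e_2,e_3$ are all determined, so the multiset $\{A_1^2,A_2^2,A_3^2\}$ — and hence $\{A_1,A_2,A_3\}$ — is fixed, and $\cos\Phi$ is recovered from the cube moment whenever $A_1A_2A_3\neq 0$.

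Finally I would count: the determined multiset $\{A_1,A_2,A_3\}$ admits at most $3!=6$ orderings as an ordered triple $(A_1,A_2,A_3)$, and $\cos\Phi$ pins $\Phi$ to at most two values mod $2\pi$, giving at most $6\times 2=12$ orbits by Lemma \ref{lemap2}(iii); the degenerate case $A_1A_2A_3=0$ is treated separately (the orbit is then determined by the placement of the amplitudes alone) and yields strictly fewer. The main obstacle is the even-moment bookkeeping in the penultimate step: unlike the 4D case, where each amplitude constraint came from a single norm, the datum $e_3$ cannot be isolated from one moment because it appears entangled with $e_3\cos2\Phi$, so one must combine the sextic and cubic moments and verify the non-degeneracy of the resulting linear relation — and, more tediously, keep careful track of exactly which sign patterns among $\pm\bm k_1,\pm\bm k_2,\pm\bm k_3$ resonate, lest a spurious phase-dependent term be overlooked.
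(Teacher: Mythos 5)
Your proposal follows essentially the same route as the paper: it pins down the data via the moments of orders $2,3,4,6$, uses the cubic moment to eliminate the phase dependence of the sextic moment, and counts orbits through the Fourier characterization of Lemma \ref{lemap2}(iii); the non-degeneracy you flag does check out against the paper's explicit system \eqref{meq} (after substituting $A_1^2A_2^2A_3^2\cos^2\alpha=b_2^2$, the coefficient of $e_3$ is $3\neq 0$). The only organizational differences are that you work with the translation-invariant phase $\Phi=\alpha_1+\alpha_2-\alpha_3$ instead of normalizing $\alpha_1=\alpha_2=0$, and that you recover the multiset $\{A_1^2,A_2^2,A_3^2\}$ from the elementary symmetric functions $e_1,e_2,e_3$, which neatly replaces the elimination argument of Lemma \ref{lemap3} while yielding the same bound of $6\times 2=12$.
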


  \begin{proof}

The proof follows a similar argument to that in the 4D case.
Fix an arbitrary translational orbit $\mathcal O_{w}$ with $w\in\mathcal C_{\bar\omega}$.
Let $\bm k_1, \bm k_2$ and $\bm k_3$ be as in Lemma \ref{lemdime1}(iii). By Lemma \ref{lemdime1}(iii) and the fact that $\bm k_1, \bm k_2$ are linearly independent, we may assume, without loss of generality, that $w$ has the following form:
\[w= A_1\cos(2\pi\bm k_1\cdot\bm x)+A_2\cos(2\pi\bm k_2\cdot\bm x)+A_3\cos(2\pi\bm k_3\cdot\bm x+\alpha),\]
where $A_i\geq 0$ for $i=1,2,3$, and $\alpha\in\mathbb R$.  Since $w\in\mathcal C_{\bar\omega}\subset\mathcal R_{\bar\omega}$, we have
\begin{equation}\label{intm1}
\int_{\mathbb T}w^m d\bm x=\int_{\mathbb T}\bar\omega^m d\bm x
\end{equation}
for any positive integer $m$.
Since $\Lambda^{**}=\Lambda$ and $(\bm k_1,\bm k_2)$ is a basis of $\Lambda^*,$ we can choose $(\bm k_1^*,\bm k_2^*)$, defined as in \eqref{dulbas}, as a new basis of $\Lambda.$ Then, as in \eqref{dulbas2}, it holds that
 \[
\bm k_1^*\cdot\bm k_1=1,\quad \bm k_1^*\cdot\bm k_2=\bm k_1\cdot\bm k_2^*=0,\quad \bm k_2^*\cdot\bm k_2=1.
\]
 By the change of variables
 \[\bm x=\frac{1}{2\pi}\left(y_1\bm k_1^*+y_2\bm k_2^*\right),\quad 0<y_1,y_2<2\pi,\]
and using  $\bm k_3=\bm k_1+\bm k_2$,    \eqref{intm1}  becomes
\begin{equation}\label{02pi}
\begin{split}
&\int_{0}^{2\pi}\int_{0}^{2\pi}(A_1\cos y_1+A_2\cos  y_2 +A_3\cos(y_1+y_2+\alpha))^mdy_1dy_2=a_m,
\end{split}
\end{equation}
where $a_m\in\mathbb R$ depends only on $\bar\omega$, $m$ and $\Lambda$.
By taking $m=2,3,4,6$ in \eqref{02pi}, respectively, we obtain
\begin{equation}\label{meq}
\begin{cases}
A_1^2+A_2^2+A_3^2=b_1,\\
A_1 A_2 A_3\cos\alpha =b_2,\\
A_1^4+A_2^4+A_3^4+4\left(A_1^2A_2^2+A_1^2A_3^2+A_2^2A_3^2\right)=b_3,\\
A_1^6+A_2^6+A_3^6+9\left(A_1^4A_2^2+A_1^4A_3^2+A_1^2A_2^4+A_2^4A_3^2+A_1^2A_3^4+A_2^2A_3^4\right)\\
\quad\ +27A_1^2A_2^2A_3^2   +18A_1^2A_2^2A_3^2\cos^2\alpha
=b_4,
\end{cases}
\end{equation}
where $b_1, b_2, b_3, b_4$ depend only on  $\bar\omega$ and $\Lambda$. 
From \eqref{meq}, we see that $(A_1^2,A_2^2,A_3^2)$  is a solution to the following system of polynomial equations:
\begin{equation}\label{ase00}
\begin{cases}
 x+ y+ z=c_1,\\
 x^2+ y^2+ z^2+4\left(x y+xz+yz\right)=c_2,\\
  x^3+ y^3+ z^3+9\left(x^2y+x^2z+xy^2+y^2z+xz^2+yz^2\right)+27 x y z=c_3,
 \end{cases}
\end{equation}
where $c_1, c_2, c_3$  depend only on  $\bar\omega$ and $\Lambda$.
By Lemma \ref{lemap3} in Appendix \ref{apdx3}, the system \eqref{ase00} has at most 6 solutions. In other words, there are at most 6 triples $(A_1,A_2,A_3)$ satisfying \eqref{meq}.

To conclude the proof, it suffices to show that each triple $(A_1,A_2,A_3)$ satisfying \eqref{meq} determines at most 2 translational orbits.
We distinguish two cases:
 \begin{itemize}
 \item[(1)] $A_1A_2A_3=0$. In this case,   each triple $(A_1,A_2,A_3)$  determines  a single translational orbit by Lemma \ref{lemap2}(iii) in Appendix \ref{apdx2}.
\item[(2)]  $A_1A_2A_3\neq 0.$ In this case,   by Lemma \ref{lemap2}(iii) again, $\mathcal O_w$ is uniquely determined by the 5-tuple $(A_1,A_2,A_3,\cos\alpha, \sin\alpha)$. Observe that when $A_1 A_2A_3 \neq 0$, $\cos \alpha$, and hence $|\sin \alpha|$,  are uniquely determined by \eqref{meq}$_2$. Thus, given a triple $(A_1, A_2, A_3)$ satisfying \eqref{meq}, there are at most two 5-tuples $(A_1, A_2, A_3, \cos\alpha, \sin\alpha)$ satisfying \eqref{meq}, which implies that each triple $(A_1, A_2, A_3)$  determines at most two translational orbits.
\end{itemize}
The proof is complete.

  \end{proof}

\subsection{Proof of Theorem \ref{thm1}}\label{sec33}

\begin{lemma}\label{lemisolated}
Every translational orbit $\mathcal O\subset \mathcal C_{\bar\omega}$ is isolated in $\mathcal C_{\bar\omega}$, i.e.,
either $\mathcal O=\mathcal C_{\bar\omega}$, or
\[\mathcal O\neq \mathcal C_{\bar\omega}\quad\mbox{and}\quad\min\left\{\|f-g\|_{L^p(\mathbb T)} \mid f\in\mathcal O,\,g\in \mathcal C_{\bar\omega}\setminus \mathcal O\right\}>0.\]
\end{lemma}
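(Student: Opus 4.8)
The plan is to reduce the isolatedness to a soft topological fact, namely that two disjoint nonempty compact subsets of a metric space are separated by a strictly positive distance, using the finiteness results established in Section~\ref{sec32} together with the compactness of each translational orbit. First I would invoke Propositions~\ref{prop2d}, \ref{prop4d}, and \ref{prop6d}: according to the value of $\dim(\mathbb{E}_1)\in\{2,4,6\}$, the equimeasurable class $\mathcal{C}_{\bar\omega}$ contains at most $1$, $2$, or $12$ translational orbits. In every case $\mathcal{C}_{\bar\omega}$ is therefore a \emph{finite} union of translational orbits,
\[
\mathcal{C}_{\bar\omega}=\bigcup_{j=1}^{N}\mathcal{O}_j,\qquad N<\infty,
\]
where the $\mathcal{O}_j$ are the distinct orbits. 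Because lying in the same translational orbit is an equivalence relation, these orbits are pairwise disjoint, and any orbit $\mathcal{O}\subset\mathcal{C}_{\bar\omega}$ coincides with exactly one of the $\mathcal{O}_j$.

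Next I would record that each $\mathcal{O}_j$ is compact in $\mathring{L}^p(\mathbb{T})$; this is precisely the observation already made in Remark~\ref{rmobt}, since an orbit is the image of the compact torus $\mathbb{T}=\mathbb{R}^2/\Lambda$ under the map $\bm{p}\mapsto w_j(\cdot-\bm{p})$, which is continuous by the continuity of translation in $L^p$. Then I would conclude as follows. Fix a translational orbit $\mathcal{O}\subset\mathcal{C}_{\bar\omega}$, say $\mathcal{O}=\mathcal{O}_{j_0}$. If $N=1$, then $\mathcal{O}=\mathcal{C}_{\bar\omega}$, which is the first alternative in the statement. Otherwise the set
\[
\mathcal{C}_{\bar\omega}\setminus\mathcal{O}=\bigcup_{j\neq j_0}\mathcal{O}_j
\]
is a finite union of compact sets, hence nonempty and compact, and it is disjoint from $\mathcal{O}$ by the pairwise disjointness of the orbits. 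Since $\mathcal{O}$ and $\mathcal{C}_{\bar\omega}\setminus\mathcal{O}$ are two disjoint nonempty compact subsets of the metric space $\mathring{L}^p(\mathbb{T})$, their distance is attained and strictly positive, so
\[
\min\left\{\|f-g\|_{L^p(\mathbb{T})}\mid f\in\mathcal{O},\,g\in\mathcal{C}_{\bar\omega}\setminus\mathcal{O}\right\}>0,
\]
which is the second alternative.

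The argument carries no genuine analytic difficulty: the entire substance has already been expended in Section~\ref{sec32}, where the finiteness of the orbit count is established—the hard part being the polynomial-system analysis of the 6D case in Proposition~\ref{prop6d}. The only points that still need a word of justification are the compactness of each orbit, already supplied by Remark~\ref{rmobt}, and the standard metric-space fact that disjoint nonempty compacts are separated by a positive distance. I therefore expect no real obstacle beyond correctly assembling the finiteness count with these two elementary facts; the single subtlety worth stating explicitly is that finiteness of $N$ is exactly what guarantees that the complement $\mathcal{C}_{\bar\omega}\setminus\mathcal{O}$ remains \emph{compact} (a union of infinitely many orbits need not be closed), which is what makes the separation argument go through.
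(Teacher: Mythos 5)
Your proposal is correct and follows essentially the same route as the paper: both deduce from Propositions~\ref{prop2d}, \ref{prop4d}, and \ref{prop6d} that $\mathcal C_{\bar\omega}$ is a finite disjoint union of compact translational orbits and then separate a given orbit from the (compact) union of the remaining ones. Your write-up merely fills in the details that the paper leaves implicit, including the useful remark that finiteness is what keeps the complement compact.
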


\begin{proof}
From Propositions \ref{prop2d}, \ref{prop4d}, and \ref{prop6d}, we know that $\mathcal{C}_{\bar\omega}$ is the union of finitely many pairwise disjoint translational orbits. It is clear that each orbit is compact in $L^p(\mathbb T)$. Hence, if $\mathcal O\neq\mathcal C_{\bar\omega}$, then $\mathcal C_{\bar\omega}\setminus\mathcal O$ is a finite union of compact sets and is therefore compact. Two disjoint compact subsets of a metric space have positive distance, which proves the claim.
\end{proof}

Now we are ready to prove Theorem \ref{thm1} via a continuity argument.

\begin{proof}[Proof of Theorem \ref{thm1}]

From Propositions \ref{propbsc} and \ref{propmc}, we know that $\mathcal C_{\bar\omega}$ is stable in the sense of Proposition \ref{propbsc}. If $\mathcal O_{\bar\omega}=\mathcal C_{\bar\omega}$, the conclusion follows immediately. Otherwise, Lemma \ref{lemisolated} gives
\[
d:=\min\left\{\|f-g\|_{L^p(\mathbb T)} \mid f\in\mathcal O_{\bar\omega},\,g\in \mathcal C_{\bar\omega}\setminus \mathcal O_{\bar\omega}\right\} >0.
\]
Fix $\varepsilon>0$ and choose $0<r<\min\{\varepsilon,d/3\}.$
By the stability of $\mathcal C_{\bar\omega}$, there exists $\delta_0>0$ such that
\[
\min_{f\in \mathcal C_{\bar\omega}}\|\zeta(0)-f\|_{L^p(\mathbb T)}<\delta_0
\quad\Longrightarrow\quad
\min_{f\in \mathcal C_{\bar\omega}}\|\zeta(t)-f\|_{L^p(\mathbb T)} <r
\quad\forall\,t\in\mathbb R.
\]
Set $\delta=\min\{\delta_0,r\}$. If
$\|\zeta(0)-\bar\omega\|_{L^p}<\delta$, then $\zeta(0)$ belongs to the $r$-neighborhood of $\mathcal O_{\bar\omega}$. For every $t$, the point $\zeta(t)$ belongs to the union of the $r$-neighborhoods of $\mathcal O_{\bar\omega}$ and $\mathcal C_{\bar\omega}\setminus\mathcal O_{\bar\omega}$. These two neighborhoods are disjoint since $2r<d$. On the other hand, $t\mapsto\zeta(t)$ is continuous and $\mathbb R$ is connected, so the trajectory cannot pass from one neighborhood to the other. Therefore,
\[
\operatorname{dist}_{L^p}(\zeta(t),\mathcal O_{\bar\omega})<r<\varepsilon
\quad\forall\,t\in\mathbb R,
\]
which is the desired conclusion.

\end{proof}

\section{A rigidity result}\label{sec4}

Suppose that $u\in C^2(\mathbb T)$ satisfies
\begin{equation}\label{selp}
\begin{cases}
-\Delta u=\varphi(u),&\bm x\in\mathbb T,\\
\int_{\mathbb T}u d\bm x=0,
\end{cases}
\end{equation}
where $\varphi\in C^1(\mathbb R)$.
If $\varphi'(s)< \lambda_1$ for any $s\in\mathbb R$, then the steady solution $\bar\omega=-\Delta u$ of the Euler equation is called an \emph{Arnold-stable state}. Due to translation invariance, any  Arnold-stable state must be trivial; see \cite[Proposition 1.1]{CDG}.
In this section, we provide an extension of this result, although it is not directly related to the main theorem of this paper.

\begin{proposition}
Suppose that $u\in C^2(\mathbb T)$ solves \eqref{selp}
with $\varphi\in C^1(\mathbb R)$.
If $\varphi'\leq \lambda_1,$ then $u\in\mathbb E_1.$
\end{proposition}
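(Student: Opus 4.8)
The plan is to exploit the translation invariance of the equation \eqref{selp} by differentiating it. Since $u\in C^2(\mathbb T)$ and $\varphi\in C^1(\mathbb R)$, the right-hand side $\varphi(u)$ lies in $C^1$, so $\Delta u\in C^1$ and elliptic regularity upgrades $u$ to $C^3(\mathbb T)$; this justifies differentiating $-\Delta u=\varphi(u)$ in each coordinate direction. Writing $w_j:=\partial_j u$ for $j=1,2$, the chain rule then yields the linearized equation
\[
-\Delta w_j=\varphi'(u)\,w_j,\qquad j=1,2.
\]

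Next I would test this equation against $w_j$ itself. Because $w_j=\partial_j u$ is a derivative of a periodic function, it has zero mean, so $w_j\in\mathring H^1(\mathbb T)$ and the Poincaré inequality (Lemma \ref{lempoincare}) applies. Integrating $-\Delta w_j=\varphi'(u)w_j$ against $w_j$ and invoking the pointwise bound $\varphi'\le\lambda_1$ produces the sandwich
\[
\lambda_1\int_{\mathbb T}w_j^2\,d\bm x\;\le\;\int_{\mathbb T}|\nabla w_j|^2\,d\bm x\;=\;\int_{\mathbb T}\varphi'(u)\,w_j^2\,d\bm x\;\le\;\lambda_1\int_{\mathbb T}w_j^2\,d\bm x,
\]
where the first inequality is Poincaré and the last uses $\varphi'(u)\le\lambda_1$ together with $w_j^2\ge 0$. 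Consequently every inequality is an equality; in particular equality holds in the Poincaré inequality, so the equality clause of Lemma \ref{lempoincare} forces $w_j\in\mathbb E_1$ (the trivial case $w_j\equiv 0$ is included, since $0\in\mathbb E_1$). Thus $\partial_1 u,\partial_2 u\in\mathbb E_1$.

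The final step promotes this to $u\in\mathbb E_1$. Set $v:=-\Delta u-\lambda_1 u$. Since each $w_j=\partial_j u$ is a $\lambda_1$-eigenfunction, one computes $\partial_j v=-\Delta(\partial_j u)-\lambda_1\partial_j u=\lambda_1\partial_j u-\lambda_1\partial_j u=0$ for both $j$, so $v$ is constant on $\mathbb T$. Integrating, $\int_{\mathbb T}v\,d\bm x=-\int_{\mathbb T}\Delta u\,d\bm x-\lambda_1\int_{\mathbb T}u\,d\bm x=0$, the first integral vanishing by periodicity and the second by the mean-zero constraint in \eqref{selp}. Hence $v\equiv 0$, i.e. $-\Delta u=\lambda_1 u$, and combined with $\int_{\mathbb T}u\,d\bm x=0$ this is exactly $u\in\mathbb E_1$.

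I expect the only genuinely technical point to be the justification for differentiating the equation (the regularity bootstrap above, or alternatively a weak-formulation argument testing against $\partial_j\phi$); the heart of the matter is the observation that the borderline hypothesis $\varphi'\le\lambda_1$ makes the Poincaré sandwich collapse to equality, which is precisely the regime in which the equality case of Lemma \ref{lempoincare} becomes available. Note that under the strict hypothesis $\varphi'<\lambda_1$ the same sandwich instead forces $w_j\equiv 0$, so $u$ is constant and hence $u\equiv 0$, recovering the triviality of Arnold-stable states.
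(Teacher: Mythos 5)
Your proposal is correct, and it reaches the paper's key intermediate waypoint --- $\partial_1 u,\partial_2 u\in\mathbb E_1$ --- by a genuinely different mechanism. The paper never differentiates the equation: it introduces the functional $J(v)=\tfrac12\int_{\mathbb T}|\nabla v|^2\,d\bm x-\int_{\mathbb T}\Phi(v)\,d\bm x$ with $\Phi'=\varphi$, uses $\varphi'\le\lambda_1$ to get the one-sided Taylor bound $\Phi(s)-\Phi(\tau)\le\varphi(\tau)(s-\tau)+\tfrac{\lambda_1}{2}(s-\tau)^2$, deduces
\[
J(v)-J(u)\ \ge\ \tfrac12\Bigl(\|\nabla(u-v)\|_{L^2}^2-\lambda_1\|u-v\|_{L^2}^2\Bigr)\ \ge\ 0
\]
with equality iff $u-v\in\mathbb E_1$, and then feeds in the translates $v=u(\cdot-\bm p)$, for which $J(v)=J(u)$; this yields $u-u(\cdot-\bm p)\in\mathbb E_1$ for every $\bm p$, hence (passing difference quotients to the limit in the closed, finite-dimensional subspace $\mathbb E_1$) $\partial_{x_i}u\in\mathbb E_1$. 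You instead linearize the PDE and run the Poincar\'e sandwich on $w_j=\partial_j u$; this is more direct and dispenses with the antiderivative and the competitor argument, at the cost of justifying the differentiation. On that point, the $C^3$ bootstrap you sketch is not quite automatic (Schauder would want $\varphi(u)\in C^{1,\alpha}$, whereas $\varphi'$ is merely continuous), but the fallback you flag --- testing the original equation against $\partial_j\phi$ to obtain $\int_{\mathbb T}\nabla w_j\cdot\nabla\phi\,d\bm x=\int_{\mathbb T}\varphi'(u)w_j\phi\,d\bm x$, which uses only $u\in C^2$ and $\varphi\in C^1$, together with the fact that $w_j\in C^1\subset\mathring H^1$ is an admissible test function by density --- closes this cleanly. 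Your final step ($v:=-\Delta u-\lambda_1u$ has vanishing gradient and zero mean) is a harmless variant of the paper's observation that $\mathbb E_1$ is closed under differentiation, so $-\Delta u\in\mathbb E_1$ and $u=\lambda_1^{-1}(-\Delta u)\in\mathbb E_1$; and your closing remark on the strict case $\varphi'<\lambda_1$ correctly recovers the triviality of Arnold-stable states cited in the paper.
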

\begin{proof}
Define $J:\mathring C^1(\mathbb T)\to \mathbb R$ as follows:
\[J(v)=\frac{1}{2}\int_{\mathbb T}|\nabla v|^2 d\bm x-\int_{\mathbb T}\Phi(v)d\bm x, \]
where $\Phi$ is an antiderivative of $\varphi$. Since $\varphi'\leq \lambda_1$, $\Phi$ satisfies
\begin{equation}\label{phi1}
\Phi(s)-\Phi(\tau)\leq \varphi(\tau)(s-\tau)+\frac{1}{2}\lambda_1(s-\tau)^2\quad\forall\,s,\tau\in\mathbb R.
\end{equation}
Based on \eqref{phi1} and applying the Poincar\'e inequality (see \eqref{pocar1}), we compute as follows:
\begin{equation}\label{jvju1}
\begin{split}
&J(v)-J(u)\\
=&\frac{1}{2}\int_{\mathbb T}
|\nabla v|^2-|\nabla u|^2 d\bm x -\int_{\mathbb T}\Phi(v)-\Phi(u)d\bm x\\
\geq& \frac{1}{2}\int_{\mathbb T}|\nabla (u-v)|^2 d\bm x+\int_{\mathbb T}\nabla u\cdot\nabla (v-u)d\bm x-\int_{\mathbb T}\varphi(u)(v-u)+\frac{1}{2}\lambda_1(v-u)^2d\bm x\\
=&\frac{1}{2}\left(\int_{\mathbb T}|\nabla (u-v)|^2 d\bm x-\lambda_1\int_{\mathbb T} (u-v)^2d\bm x\right)\\
\geq& 0,
\end{split}
\end{equation}
and the last inequality is an equality if and only if $u-v\in\mathbb E_1$.
 On the other hand, it is clear that $J$ is invariant under translations, i.e.,
\begin{equation}\label{jvju2}
J(u(\cdot-\bm p))=J(u)\quad\forall\, \bm p\in\mathbb R^2.
\end{equation}
Combining  \eqref{jvju1} and \eqref{jvju2}, we obtain
\[u-u(\cdot-\bm p) \in\mathbb E_1\quad\forall\bm p\in\mathbb  R^2,\]
 which implies that
 \[\partial_{x_i}u\in \mathbb E_1,\quad i=1,2.\]
 Since $\mathbb E_1$ is closed under the operation of taking partial derivatives, we further deduce that
 $-\Delta u\in \mathbb E_1$. Hence
 \[u=\mathsf G(-\Delta u)=\lambda_1^{-1}(-\Delta u)\in\mathbb E_1,\]
which completes the proof.
\end{proof}

\appendix

\section{Some auxiliary results}\label{apdx1}
In this appendix, we present some auxiliary results for the reader's convenience.
We begin with the rigorous definition of the Green operator.
\begin{definition}\label{defap1}
The Green operator $\mathsf{G}$ is defined as the inverse of $-\Delta$ subject to the mean-zero condition, i.e., for any mean-zero function $f$, $u:=\mathsf Gf$ is the unique solution to the following Poisson equation:
\begin{equation}\label{poeq1}
\begin{cases}
  -\Delta u=f, & \bm x\in\mathbb T, \\
 \int_{\mathbb T} u d\bm x=0.
\end{cases}
\end{equation}
\end{definition}

The following  properties of the Green operator are frequently used in this paper.
\begin{itemize}
  \item $\mathsf G$ is a bounded linear operator mapping $\mathring L^p(\mathbb T)$ onto $\mathring W^{2,p}(\mathbb T)$, and thus a compact operator mapping $\mathring L^p(\mathbb T)$ into $\mathring L^q(\mathbb T)$ for any $1\le q\le \infty$. This can be proved by repeating  the argument in the proof of \cite[Lemma 3.1]{CWZ}.
  \item  $\mathsf G$ is symmetric,  i.e.,
\[\int_{\mathbb T}f\mathsf Ggd\bm x=\int_{\mathbb T}g\mathsf Gfd\bm x\quad\forall\,f,g\in \mathring L^p(\mathbb T).\]
This can be proved via integration by parts.
\item $\mathsf G$ is positive definite, i.e.,
\[\int_{\mathbb T}f\mathsf Gf d\bm x\geq 0\quad\forall\,f\in \mathring L^p(\mathbb T),\]
with the inequality being an equality if and only if $f=0$ a.e. on $\mathbb T.$
\end{itemize}

Next, we present two lemmas that are used in deriving the vorticity formulation \eqref{voreq} of the Euler equation.
\begin{lemma}\label{lma1}
The integral of the velocity
is a conserved quantity of the Euler equation.
\end{lemma}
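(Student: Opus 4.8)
The statement to prove is Lemma \ref{lma1}: the integral of the velocity is a conserved quantity of the Euler equation.

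The plan is to show directly that $\frac{d}{dt}\int_{\mathbb T}\bm v\,d\bm x=\bm 0$ by integrating the momentum equation \eqref{euler} over the torus. First I would integrate the first equation in \eqref{euler} componentwise:
\[
\frac{d}{dt}\int_{\mathbb T}\bm v\,d\bm x=-\int_{\mathbb T}(\bm v\cdot\nabla)\bm v\,d\bm x-\int_{\mathbb T}\nabla P\,d\bm x,
\]
where the time derivative has been pulled outside the integral (justified by the assumed smoothness of the solution). It then suffices to show that each of the two integrals on the right-hand side vanishes.

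For the pressure term, the key point is that $\mathbb T$ is a closed manifold without boundary, so $\int_{\mathbb T}\nabla P\,d\bm x=\bm 0$: each component is the integral of a derivative of a periodic function, which vanishes by the fundamental theorem of calculus applied over a full period (equivalently, by the divergence theorem with no boundary contribution). For the convective term I would use the incompressibility constraint $\nabla\cdot\bm v=0$ to rewrite it in divergence form. Writing the $j$-th component as $\sum_i v_i\,\partial_i v_j=\sum_i\partial_i(v_iv_j)-v_j\sum_i\partial_i v_i=\sum_i\partial_i(v_iv_j)$, since $\sum_i\partial_i v_i=\nabla\cdot\bm v=0$. Thus each component of the convective term is again a sum of derivatives of periodic functions, and its integral over $\mathbb T$ vanishes by the same periodicity argument. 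Combining these two observations yields $\frac{d}{dt}\int_{\mathbb T}\bm v\,d\bm x=\bm 0$, so $\int_{\mathbb T}\bm v\,d\bm x$ is constant in time.

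There is no serious obstacle here; the computation is routine once the two structural facts are invoked, namely that the torus has no boundary (killing the pressure gradient and the divergence-form convective term) and that incompressibility lets one write the nonlinear transport term as a total divergence. The only point requiring mild care is justifying the interchange of differentiation in $t$ and integration in $\bm x$, which is immediate for the sufficiently smooth solutions under consideration.
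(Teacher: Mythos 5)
Your proof is correct, and it handles the nonlinear term by a genuinely different (and somewhat slicker) decomposition than the paper. You put the convective term directly in conservation form, $\sum_i v_i\partial_i v_j=\sum_i\partial_i(v_iv_j)$ using $\nabla\cdot\bm v=0$, so that every term on the right-hand side is a total derivative of a $\Lambda$-periodic function and integrates to zero over the fundamental domain. The paper instead uses the rotational (Lamb) form of the nonlinearity, $(\bm v\cdot\nabla)\bm v=\tfrac12\nabla|\bm v|^2-\omega\bm v^\perp$, so that after integration only $\int_{\mathbb T}\omega\bm v^\perp\,d\bm x$ survives, and it then verifies by a component-by-component integration by parts (again invoking incompressibility) that this integral vanishes. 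The two arguments use the same ingredients --- absence of boundary and $\nabla\cdot\bm v=0$ --- but your divergence-form rewriting disposes of the nonlinear term in one step, whereas the paper's route requires the extra computation with the four integrals $\int v_i\partial_j v_k\,d\bm x$; the paper's form has the minor advantage of isolating the vorticity, which is the natural variable elsewhere in the article. One point worth making explicit in your write-up: since $\mathbb T=\mathbb R^2/\Lambda$ for a general lattice, the statement ``the integral of a derivative of a periodic function vanishes'' should be justified by the divergence theorem on the fundamental parallelogram (opposite sides carry opposite normals and identical function values), rather than by a coordinate-axis-aligned fundamental theorem of calculus; this is immediate but is the correct formulation for a non-rectangular lattice.
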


\begin{proof}
Note that the momentum equation \eqref{euler}$_1$ can be written as
\[
\partial_t \bm{v} + \frac{1}{2} \nabla |\bm{v}|^2 - \omega \mathbf{v}^\perp = - \nabla P,
\]
where  $\bm v^\perp=(v_2,-v_1)$.
Integrating the above equation directly yields
\[
\frac{d}{dt} \int_{\mathbb{T}} \bm{v} \, d\bm x = \int_{\mathbb{T}} \omega \bm{v}^\perp \, d\bm x.
\]
On the other hand,
\[
\int_{\mathbb{T}} \omega \mathbf{v}^\perp \, d\bm x
= \int_{\mathbb{T}} (\partial_1 v_2 - \partial_2 v_1)(v_2, -v_1) \, dx
= \int_{\mathbb{T}} (v_2 \partial_1 v_2 - v_2 \partial_2 v_1,\; -v_1 \partial_1 v_2 + v_1 \partial_2 v_1) \, d\bm x.
\]
It is easy to check that
\[
\int_{\mathbb{T}} v_2 \partial_1 v_2 \, d\bm x=\frac{1}{2}\int_{\mathbb T}\partial_1 v_2^2 d\bm x=0,\quad
 \int_{\mathbb{T}} v_1 \partial_{2} v_1 \, d\bm x=\frac{1}{2}\int_{\mathbb T}\partial_2 v_1^2 d\bm x=0,
\]
\[
\int_{\mathbb{T}} v_2 \partial_2 v_1 \, d\bm x
= -\int_{\mathbb{T}} v_1 \partial_2 v_2 \, d\bm x
= \int_{\mathbb{T}} v_1 \partial_1 v_1 \, d\bm x = \frac{1}{2}\int_{\mathbb T}\partial_1 v_1^2 d\bm x=0,
\]
\[
\int_{\mathbb{T}} v_1 \partial_1 v_2 \, d\bm x
= -\int_{\mathbb{T}} v_2 \partial_1 v_1 \, d\bm x
= \int_{\mathbb{T}} v_2 \partial_2 v_2 \, d\bm x = \frac{1}{2}\int_{\mathbb T}\partial_2 v_2^2 d\bm x=0,
\]
where the incompressibility condition $\nabla\cdot\bm v=0$ was used. The proof is complete.
\end{proof}

\begin{lemma}\label{lma2}
Suppose that
\begin{equation}\label{082201}
\int_{\mathbb{T}} \bm{v} \, dx = \bm {0}.
\end{equation}
Then $\psi:=\mathsf G\omega$ satisfies
$
  \bm{v} = (\partial_2 \psi, -\partial_1 \psi).
$
\end{lemma}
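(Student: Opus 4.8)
The plan is to exhibit $\bm v$ and $\nabla^\perp\psi$ as two divergence-free fields sharing the same scalar vorticity, and then to show that their difference is forced to vanish by a harmonic (Hodge-type) argument together with the mean-zero normalization. I would begin by recording two elementary identities for $\nabla^\perp\psi=(\partial_2\psi,-\partial_1\psi)$. It is automatically divergence-free, since $\partial_1\partial_2\psi-\partial_2\partial_1\psi=0$, and its scalar vorticity equals $\omega$: indeed
\[
\partial_1(-\partial_1\psi)-\partial_2(\partial_2\psi)=-\Delta\psi=\omega,
\]
because $\psi=\mathsf G\omega$ solves $-\Delta\psi=\omega$ by Definition \ref{defap1}.

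Next I would set $\bm w:=\bm v-\nabla^\perp\psi$. Combining the previous step with the hypotheses $\nabla\cdot\bm v=0$ and $\omega=\partial_1 v_2-\partial_2 v_1$, the field $\bm w$ is simultaneously divergence-free and curl-free:
\[
\partial_1 w_1+\partial_2 w_2=0,\qquad \partial_1 w_2-\partial_2 w_1=0.
\]
Differentiating these two relations and substituting one into the other yields $\Delta w_1=\Delta w_2=0$, so each component of $\bm w$ is a harmonic function on $\mathbb T$. Since $\mathbb T$ is compact and boundaryless, every harmonic function is constant (by the maximum principle, or equivalently by Fourier expansion over $\Lambda^*$), so $\bm w$ is a constant vector.

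Finally I would pin down this constant using the mean-zero data. By assumption \eqref{082201}, $\int_{\mathbb T}\bm v\,d\bm x=\bm 0$; moreover $\int_{\mathbb T}\nabla^\perp\psi\,d\bm x=\bm 0$, because each entry is a derivative of the periodic function $\psi$ and hence integrates to zero over $\mathbb T$. Therefore $\int_{\mathbb T}\bm w\,d\bm x=\bm 0$, and a constant field of zero mean must vanish, giving $\bm w\equiv\bm 0$ and thus $\bm v=(\partial_2\psi,-\partial_1\psi)$, as claimed.

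The conceptual crux, and the only genuinely nontrivial point, is the assertion that a field which is at once divergence-free and curl-free on the torus must be constant. Unlike on a simply connected planar domain, the two-torus carries a two-dimensional space of harmonic vector fields (reflecting $H^1(\mathbb T)\cong\mathbb R^2$), and these are precisely the constant fields. This is exactly why $\omega$ alone does not determine $\bm v$, and why the mean-zero normalization \eqref{082201} is indispensable: it removes this harmonic ambiguity and selects the unique representative $\nabla^\perp\mathsf G\omega$. I would therefore take care to state the harmonic-implies-constant step cleanly, since the whole lemma hinges on the interplay between that cohomological degree of freedom and the zero-mean hypothesis.
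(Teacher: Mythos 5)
Your argument is correct and is essentially the paper's proof repackaged: the paper applies $-\Delta$ to $v_1$ and to $\partial_2\psi$ separately, uses incompressibility to get $-\Delta v_1=\partial_2\omega=-\Delta(\partial_2\psi)$, and concludes by uniqueness of the mean-zero Poisson problem, which is exactly your ``harmonic on the compact torus plus zero mean implies zero'' step applied to the difference field $\bm w=\bm v-\nabla^\perp\psi$. Your framing via the divergence-free/curl-free decomposition and the cohomological remark is a nice clarification of why the normalization \eqref{082201} is needed, but the underlying computation and the key uniqueness fact are the same.
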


\begin{proof}
Observe that
\[-\Delta(\partial_2\psi)=\partial_2(-\Delta\psi)
= \partial_2 \omega = \partial_2(\partial_1 v_2-\partial_2 v_1)=-\Delta v_1.\]
 Since $\partial_2 \psi$, as well as $v_1$ by \eqref{082201}, has zero mean,  it follows that $v_1 = \partial_2 \psi$. Similarly, $v_2 = -\partial_1 \psi$. The proof is complete.
\end{proof}

\section{Characterization of translational orbits in $\mathbb E_1$}\label{apdx2}

\begin{lemma}\label{lemap2}
The translational orbits in $\mathbb E_1$ can be characterized in terms of Fourier coefficients as follows:
\begin{itemize}
\item[(i)]  ($\rm dim(\mathbb E_1)=2$) For any $w, w'\in\mathbb E_1$ with the form
\[w=A\cos(2\pi\bm k\cdot\bm x+\alpha),\quad A\geq0, \ \alpha\in\mathbb R,\]
\[w'=A'\cos(2\pi\bm k\cdot\bm x+\alpha'),\quad A'\geq0, \ \alpha'\in\mathbb R,\]
where $\bm k$ is as in Lemma \ref{lemdime1}(i),  it holds that
\[\mathcal O_{w'}=\mathcal O_{w}\quad \Longleftrightarrow\quad  A=A'.\]
 \item[(ii)] ($\rm dim(\mathbb E_1)=4$)  For any $w, w'\in\mathbb E_1$ with the form
\[w=\sum_{i=1}^2A_i\cos(2\pi\bm k_i\cdot\bm x+\alpha_i),\quad A_i\geq0,\ \alpha_i \in\mathbb R,\]
\[w'=\sum_{i=1}^2A_i'\cos(2\pi\bm k_i\cdot\bm x+\alpha_i'),\quad A_i'\geq0,\ \alpha_i'\in\mathbb R,\]
where $\bm k_1$ and $\bm k_2$ are as in Lemma \ref{lemdime1}(ii),  it holds that
\[\mathcal O_{w'}=\mathcal O_{w}\quad \Longleftrightarrow\quad  (A_1,A_2)=(A_1', A_2').\]

 \item[(iii)]($\rm dim(\mathbb E_1)=6$)  For any $w, w'\in\mathbb E_1$ with the form
\[w=\sum_{i=1}^3A_i\cos(2\pi\bm k_i\cdot\bm x+\alpha_i),\quad A_i\geq0,\ \alpha_i \in\mathbb R,\]
\[w'=\sum_{i=1}^3A_i'\cos(2\pi\bm k_i\cdot\bm x+\alpha_i'),\quad A_i'\geq0,\ \alpha_i'\in\mathbb R,\]
where $\bm k_1, \bm k_2$ and $\bm k_3$ are as in Lemma \ref{lemdime1}(iii), it holds that
\[\mathcal O_{w'}=\mathcal O_{w}\,\, \Longleftrightarrow\,\,  \left(A_1,A_2,A_3,A_1A_2A_3 e^{{\rm i}(\alpha_1+\alpha_2-\alpha_3)}\right)=\left(A_1',A_2',A_3',A_1'A_2'A_3' e^{{\rm i}(\alpha_1'+\alpha_2'-\alpha_3')}\right)
 \]
In particular, if $\alpha_1=\alpha_2=\alpha_1'=\alpha_2'=0,$ then
\[
\mathcal O_{w'}=\mathcal O_{w}\,\, \Longleftrightarrow\,\,
\begin{cases}
\left(A_1, A_2, A_3\right)=
 \left(A_1', A_2', A_3'\right),&\mbox{if }A_1A_2A_3=0,\\
 \left(A_1, A_2, A_3,\cos\alpha_3,\sin\alpha_3\right)=
 \left(A_1', A_2', A_3', \cos\alpha_3',\sin\alpha_3'\right),&\mbox{if }A_1A_2A_3\neq 0.
\end{cases}
\]
\end{itemize}
\end{lemma}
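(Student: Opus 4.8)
The plan is to pass to complex Fourier coefficients, where translation acts by unimodular multipliers, and then to read off the invariants from the linear relations among the frequencies. For $w$ as in the statement I write $c_i:=\tfrac12 A_ie^{{\rm i}\alpha_i}$ for the coefficient of $e^{2\pi{\rm i}\bm k_i\cdot\bm x}$ (so $\bar c_i$ is the coefficient of $e^{-2\pi{\rm i}\bm k_i\cdot\bm x}$), and analogously $c_i'$ for $w'$. Orthogonality of the exponentials shows the $c_i$ are uniquely determined by $w$, so $A_i=2|c_i|$ and, when $A_i>0$, $\alpha_i\pmod{2\pi}$ are intrinsic. A direct computation shows that translating by $\bm p$ replaces each $c_i$ by $c_iu_i$ with $u_i:=e^{-2\pi{\rm i}\bm k_i\cdot\bm p}$; since $\bm k_i\in\Lambda^*$, each $u_i$ is a well-defined unimodular number on $\mathbb T$. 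In particular the amplitudes $A_i$ are always translation-invariant, so the whole lemma concerns which phase configurations lie in one orbit.

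First I would settle (i) and (ii). The reachable multipliers form the image of the linear map $\bm p\mapsto(2\pi\bm k_i\cdot\bm p)_i$ under the exponential. In (i) the single $u_1$ sweeps the entire unit circle; in (ii), since $\bm k_1,\bm k_2$ are linearly independent, the pair $(u_1,u_2)$ sweeps the whole $2$-torus. Hence every phase configuration is reachable, and matching amplitudes already forces a common orbit, so in both cases the amplitudes form a complete invariant.

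The substance is (iii). The sole relation among the frequencies, $\bm k_3=\bm k_1+\bm k_2$, forces $u_3=u_1u_2$, while $(u_1,u_2)$ still sweeps the full $2$-torus; thus the reachable action is $(c_1,c_2,c_3)\mapsto(c_1u_1,c_2u_2,c_3u_1u_2)$, under which $c_1c_2\bar c_3$ is invariant because the unimodular factors cancel. I would show that the amplitudes together with $c_1c_2\bar c_3$ form a complete invariant, which is precisely the displayed condition: the equality of the two $4$-tuples says $A_i=A_i'$ for $i=1,2,3$ together with $A_1A_2A_3'e^{{\rm i}(\alpha_1+\alpha_2+\alpha_3')}=A_1'A_2'A_3e^{{\rm i}(\alpha_1'+\alpha_2'+\alpha_3)}$, and the latter is exactly $c_1c_2c_3'=c_1'c_2'c_3$ after clearing the factor $8$. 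The forward implication follows by substituting $c_i'=c_iu_i$ and using $u_3=u_1u_2$: both sides become $c_1c_2c_3u_1u_2$. For the converse, when $A_1A_2A_3>0$ I rewrite $c_1c_2c_3'=c_1'c_2'c_3$ as $(c_1'/c_1)(c_2'/c_2)=c_3'/c_3$; writing each unimodular ratio as $e^{{\rm i}\theta_i}$ gives $\theta_3\equiv\theta_1+\theta_2\pmod{2\pi}$, so $u_i:=c_i'/c_i$ is a reachable tuple and the independence of $\bm k_1,\bm k_2$ yields a $\bm p$ with $w(\cdot-\bm p)=w'$.

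The hard part, and the reason for the cross-term shape of the condition, is the degenerate subcases of (iii) where some amplitude vanishes. If $A_1=0$ then $c_1=c_1'=0$, the fourth-coordinate condition holds automatically, and the $\bm k_1$-mode is absent; since any two of $\bm k_1,\bm k_2,\bm k_3$ are independent (because $\bm k_3=\bm k_1+\bm k_2$ with $\bm k_1,\bm k_2$ independent), I can fix the surviving phases by choosing $u_2$ and then exploit the now-free multiplier $u_1$ to realize the required $u_3=u_1u_2$, solving for $\bm p$ through the independence of $\bm k_1,\bm k_2$; the cases $A_2=0$ and $A_3=0$ are symmetric. Finally the ``in particular'' statement drops out by setting $\alpha_1=\alpha_2=\alpha_1'=\alpha_2'=0$: the cross condition becomes vacuous exactly when $A_1A_2=0$, and otherwise reduces to $A_3e^{{\rm i}\alpha_3}=A_3'e^{{\rm i}\alpha_3'}$. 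I expect the uniform check that the cross-term condition is vacuous precisely when an amplitude vanishes to be the only delicate bookkeeping; everything else is the linear algebra of the single relation $\bm k_3=\bm k_1+\bm k_2$.
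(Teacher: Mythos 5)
Your proposal is correct and takes essentially the same approach as the paper: both match complex Fourier coefficients mode by mode, use that translation by $\bm p$ multiplies the $\bm k_i$-mode by the unimodular factor $e^{-2\pi{\rm i}\bm k_i\cdot\bm p}$, exploit the linear independence of $\bm k_1,\bm k_2$ to make the phase pair sweep the full torus, and extract the invariant forced by $\bm k_3=\bm k_1+\bm k_2$. Your treatment of the degenerate subcases where some amplitude vanishes is more explicit than the paper's, but the underlying argument is the same.
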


\begin{proof}
 First, we prove (i):
  \begin{align*}
  \mathcal O_{w}=\mathcal O_{w'}&\Longleftrightarrow w'=w(\cdot-\bm p) \,\,\mbox{for some } \bm p\in\mathbb R^2\\
  &\Longleftrightarrow  A'\cos(2\pi\bm k\cdot\bm x+\alpha')\equiv A\cos(2\pi\bm k\cdot(\bm x-\bm p)+\alpha) \,\,\mbox{for some } \bm p\in\mathbb R^2\\
   &\Longleftrightarrow \begin{cases} A'\cos \alpha'=A\cos(\alpha-2\pi\bm k\cdot\bm p),\\
    A'\sin \alpha'=A\sin(\alpha-2\pi\bm k\cdot\bm p)
    \end{cases} \,\,\mbox{for some } \bm p\in\mathbb R^2\\
   &\Longleftrightarrow
    A' e^{{\rm i}\alpha'}=Ae^{{\rm i}(\alpha-2\pi\bm k\cdot\bm p)} \,\,\mbox{for some } \bm p\in\mathbb R^2\\
    &\Longleftrightarrow A'=A.
  \end{align*}

Next, we  prove (ii):
\begin{align*}
  \mathcal O_{w}=\mathcal O_{w'}&\Longleftrightarrow w'=w(\cdot-\bm p) \,\,\mbox{for some } \bm p\in\mathbb R^2\\
  &\Longleftrightarrow \sum_{i=1}^2A_i'\cos(2\pi\bm k_i\cdot\bm x+\alpha_i')\equiv \sum_{i=1}^2A_i\cos(2\pi\bm k_i\cdot(\bm x-\bm p)+\alpha_i) \,\,\mbox{for some } \bm p\in\mathbb R^2\\
   &\Longleftrightarrow
   A_i' e^{{\rm i}\alpha_i'}=A_ie^{{\rm i}(\alpha_i-2\pi\bm k_i\cdot\bm p)},\,i=1,2 \,\,\mbox{for some } \bm p\in\mathbb R^2\\
    &\Longleftrightarrow A_i'=A_i,\,i=1,2.
  \end{align*}
Here we used the fact that there exists a unique $\bm p\in\mathbb R^2$ such that
\[\alpha_1'=\alpha_1-2\pi\bm k_1\cdot\bm p,\quad \alpha_2'=\alpha_2-2\pi\bm k_2\cdot\bm p,\]
since $\bm k_1, \bm k_2$ are linearly independent.

Finally, we prove (iii):
\begin{align*}
  \mathcal O_{w}=\mathcal O_{w'}&\Longleftrightarrow w'=w(\cdot-\bm p) \,\,\mbox{for some } \bm p\in\mathbb R^2\\
  &\Longleftrightarrow \sum_{i=1}^3A_i'\cos(2\pi\bm k_i\cdot\bm x+\alpha_i')\equiv \sum_{i=1}^3A_i\cos(2\pi\bm k_i\cdot(\bm x-\bm p)+\alpha_i) \,\,\mbox{for some } \bm p\in\mathbb R^2\\
   &\Longleftrightarrow
   \begin{cases}
     A_1' e^{{\rm i}\alpha_1'}=A_1e^{{\rm i}(\alpha_1-2\pi\bm k_1\cdot\bm p)}, \\
      A_2' e^{{\rm i}\alpha_2'}=A_2e^{{\rm i}(\alpha_2-2\pi\bm k_2\cdot\bm p)},\\
       A_3' e^{{\rm i}\alpha_3'}=A_3e^{{\rm i}(\alpha_3-2\pi(\bm k_1+\bm k_2)\cdot\bm p)}
   \end{cases}  \,\,\mbox{for some } \bm p\in\mathbb R^2\\
 &\Longleftrightarrow
        \begin{cases}
      A_i'=A_i,\ i=1,2,3,\\
     A_1'A_2'A_3' e^{{\rm i}(\alpha_1'+\alpha_2'-\alpha_3')}= A_1A_2A_3 e^{{\rm i}(\alpha_1+\alpha_2-\alpha_3)},
      \end{cases}
  \end{align*}
  where we have used  $\bm k_3=\bm k_1+\bm k_2.$ To see the last equivalence, notice that the implication $``\Longrightarrow"$ is obvious; while for $``\Longleftarrow"$, we distinguish two cases:
   \begin{itemize}
     \item [(1)]$A_1A_2A_3=0.$ Without loss of generality, we may assume that $A_3=0.$ Then
         \[A_3' e^{{\rm i}\alpha_3'}=A_3e^{{\rm i}(\alpha_3-2\pi(\bm k_1+\bm k_2)\cdot\bm p)}\]
         holds automatically.  Using the fact that $\bm k_1$ and $\bm k_2$ are linearly independent,  there exists some $\bm p\in\mathbb R^2$ such that
      \[\alpha_1'=\alpha_1-2\pi\bm k_1\cdot\bm p\quad\mbox{and}\quad \alpha_2'=\alpha_2-2\pi\bm k_2\cdot\bm p.\]
      Then it follows that
         \[A_1' e^{{\rm i}\alpha_1'}=A_1e^{{\rm i}(\alpha_1-2\pi\bm k_1\cdot\bm p)}\quad \mbox{and}\quad
      A_2' e^{{\rm i}\alpha_2'}=A_2e^{{\rm i}(\alpha_2-2\pi\bm k_2\cdot\bm p)}.\]
     \item [(2)]$A_1A_2A_3\neq 0.$  Using the fact that $\bm k_1$ and $\bm k_2$ are linearly independent again, we can choose  $\bm p\in\mathbb R^2$ such that
      \[\alpha_1'=\alpha_1-2\pi\bm k_1\cdot\bm p\quad\mbox{and}\quad \alpha_2'=\alpha_2-2\pi\bm k_2\cdot\bm p.\]
      Then   \[A_1' e^{{\rm i}\alpha_1'}=A_1e^{{\rm i}(\alpha_1-2\pi\bm k_1\cdot\bm p)}\quad \mbox{and}\quad
      A_2' e^{{\rm i}\alpha_2'}=A_2e^{{\rm i}(\alpha_2-2\pi\bm k_2\cdot\bm p)}.\]
      Moreover, in view of the condition
        \[A_1'A_2'A_3' e^{{\rm i}(\alpha_1'+\alpha_2'-\alpha_3')}= A_1A_2A_3 e^{{\rm i}(\alpha_1+\alpha_2-\alpha_3)},\]
        such $\bm p$ also ensures that
        \[A_3' e^{{\rm i}\alpha_3'}=A_3e^{{\rm i}(\alpha_3-2\pi(\bm k_1+\bm k_2)\cdot\bm p)}.\]
   \end{itemize}
\end{proof}

\section{On a system of polynomial equations}\label{apdx3}

\begin{lemma}\label{lemap3}
The following system of polynomial equations has at most 6 solutions for any $c_1,c_2,c_3\in\mathbb R$:
\begin{equation}\label{ase0}
\begin{cases}
 x+ y+ z=c_1,\\
 x^2+ y^2+ z^2+4\left(x y+xz+yz\right)=c_2,\\
  x^3+ y^3+ z^3+9\left(x^2y+x^2z+xy^2+y^2z+xz^2+yz^2\right)+27 x y z=c_3,
 \end{cases}
\end{equation}
\end{lemma}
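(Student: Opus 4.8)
The plan is to recognize that the left-hand sides of \eqref{ase0}, despite their apparent complexity, are symmetric polynomials in $x,y,z$ and can therefore be rewritten through the elementary symmetric polynomials $e_1=x+y+z$, $e_2=xy+yz+zx$, $e_3=xyz$. First I would record the standard identities relating the power sums $p_k=x^k+y^k+z^k$ to the $e_i$ (Newton's identities), namely $p_2=e_1^2-2e_2$ and $p_3=e_1^3-3e_1e_2+3e_3$, together with the symmetric-sum identity $x^2y+x^2z+xy^2+y^2z+xz^2+yz^2=e_1e_2-3e_3$.

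Substituting these into \eqref{ase0}, I expect the system to collapse into a triangular system for $(e_1,e_2,e_3)$: the first equation reads $e_1=c_1$; the second becomes $p_2+4e_2=e_1^2+2e_2=c_2$; and the third becomes $p_3+9(e_1e_2-3e_3)+27e_3=e_1^3+6e_1e_2+3e_3=c_3$. Each successive equation introduces exactly one new elementary symmetric function, and does so linearly, so I can solve in turn for $e_1=c_1$, then $e_2=(c_2-c_1^2)/2$, and finally $e_3=(c_3-e_1^3-6e_1e_2)/3$. The upshot is that $(e_1,e_2,e_3)$ is \emph{uniquely} determined by $(c_1,c_2,c_3)$.

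The final step is to convert uniqueness of $(e_1,e_2,e_3)$ into a bound on the number of solutions. Once $(e_1,e_2,e_3)$ is fixed, any solution $(x,y,z)$ must have $x,y,z$ equal to the three roots, counted with multiplicity, of the single monic cubic $t^3-e_1t^2+e_2t-e_3=0$. A cubic has at most three distinct roots, so the multiset $\{x,y,z\}$ is completely determined, and every solution of \eqref{ase0} is simply an ordering of these three roots. There are at most $3!=6$ such orderings, which yields the claimed bound of $6$ (and strictly fewer when some of the roots coincide).

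The only genuine obstacle is the symmetric-function bookkeeping in the reduction, specifically verifying the cancellations in the third equation: the $-3e_1e_2$ coming from $p_3$ must combine with the $9e_1e_2$ to produce $6e_1e_2$, while the $e_3$-terms $3e_3-27e_3+27e_3$ must collapse to $3e_3$. These computations are routine once Newton's identities and the symmetric-sum identity are in hand, so after confirming them the entire argument reduces to the observation that a symmetric cubic system determines the $e_i$ and hence the roots up to ordering.
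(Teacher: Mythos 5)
Your proof is correct: the identities you invoke check out ($p_2=e_1^2-2e_2$, $p_3=e_1^3-3e_1e_2+3e_3$, and $\sum_{\mathrm{sym}}x^2y=e_1e_2-3e_3$), the system does collapse to the triangular linear system $e_1=c_1$, $e_1^2+2e_2=c_2$, $e_1^3+6e_1e_2+3e_3=c_3$, and the conclusion that every solution is an ordering of the roots of $t^3-e_1t^2+e_2t-e_3$ gives the bound $3!=6$. The paper reaches the same endpoint by direct elimination: it solves the first equation for $y+z$, substitutes into the second to express $yz$ as a quadratic in $x$, and substitutes both into the third to obtain a cubic in $x$ alone; it then counts at most $3$ roots for $x$ times at most $2$ pairs $(y,z)$ for each. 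The two routes are in substance identical --- the paper's cubic in $x$ is exactly $3\left(x^3-e_1x^2+e_2x-e_3\right)$ for your values of $e_1,e_2,e_3$ --- but your symmetric-function bookkeeping is cleaner and makes the mechanism transparent: the system determines the multiset $\{x,y,z\}$ uniquely, so the solution count is at most the number of orderings. The paper's elimination is more pedestrian but requires no identities beyond expanding $(y+z)^2$ and $(y+z)^3$, and it is the computation one would hand to a computer algebra system.
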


\begin{proof}
Notice that  \eqref{ase0}$_2$ can be written as
\begin{equation}\label{xyz01}
x^2+\left( y+ z\right)^2+2 y z+4 x\left( y+ z\right)=c_2.
\end{equation}
Inserting  \eqref{ase0}$_1$ into \eqref{xyz01}, we obtain
\[
 x^2+\left(c_1- x\right)^2+2 y z+4 x\left(c_1- x\right)=c_2,
\]
which implies that
\begin{equation}\label{ase2}
 y z= x^2-c_1 x
+\frac{1}{2}(c_2-c_1^2).
\end{equation}
To proceed, notice that   \eqref{ase0}$_3$ can be written as
\begin{equation}\label{ase3}
   x^3+\left( y+ z\right)^3+9 x^2\left( y+ z\right)+9 x\left(( y+ z)^2-2 y z\right)+6 y z\left( y+ z\right)+27 x y z=c_3.
\end{equation}
Inserting \eqref{ase0}$_1$ and \eqref{ase2} into \eqref{ase3} gives
\begin{equation}\label{ase4}
\begin{split}
 & x^3+(c_1- x)^3+9 x^2(c_1- x)+9 x\left((c_1- x)^2-2\left( x^2-c_1 x
+\frac{1}{2}(c_2-c_1^2)\right)\right)\\
&+6\left( x^2-c_1 x
+\frac{1}{2}(c_2-c_1^2)\right)\left(c_1- x\right)+27 x\left( x^2-c_1 x
+\frac{1}{2}(c_2-c_1^2)\right)=c_3,
\end{split}
  \end{equation}
which can be simplified as
\[
3 x^3-3c_1 x^2+\frac{3}{2}\left(c_2-c_1^2\right) x +3c_1c_2-2 c_1^3-c_3=0.
\]
This is a cubic equation, which has at most 3 roots. Moreover, for each root $x$, there are at most 2 pairs $(y,z)$ satisfying  \eqref{ase0}$_1$ and \eqref{ase2}. Therefore, \eqref{ase0}  has at most 6 solutions. The proof is complete.

\end{proof}

\bigskip

 \noindent{\bf Acknowledgements:} G. Wang was supported by NNSF of China (Grant No. 12471101) and  Fundamental Research Funds for the Central Universities (Grant No. DUT23RC(3)077).

\bigskip
\noindent{\bf  Data Availability} Data sharing not applicable to this article as no datasets were generated or analysed during the current study.

\bigskip
\noindent{\bf Conflict of interest}    The author declares that he has no conflict of interest to this work.

\phantom{s}
 \thispagestyle{empty}

\end{document}